\newtheorem{theorem}{Theorem}[section]
\newtheorem{proposition}[theorem]{Proposition}
\newtheorem{lemma}[theorem]{Lemma}
\theoremstyle{definition}
\newtheorem{definition}[theorem]{Definition}
\newtheorem{example}[theorem]{Example}
\newtheorem*{remark*}{Remark}
\newcommand{\type}{\operatorname{type}}
\newcommand{\tri}{\triangleleft}
\newcommand{\pr}{\operatorname{pr}}
\newcommand{\z}{\mathbb{Z}}
\begin{document}

\title[Linear extensions of MCQs and MCQ Alexander pairs]
{Linear extensions of multiple conjugation quandles and MCQ Alexander pairs}

\author[T.~Murao]{Tomo Murao}
\address[T.~Murao]{Institute of Mathematics, University of Tsukuba, Ibaraki 305-8571, Japan}
\email{t-murao@math.tsukuba.ac.jp}

\keywords{multiple conjugation quandle; linear extension; MCQ Alexander pair; handlebody-knot}
\subjclass[2010]{Primary 57M27; Secondary 57M25, 57M15}

\begin{abstract}
A quandle is an algebra whose axioms are motivated from knot theory.
A linear extension of a quandle can be described by using a pair of maps called an Alexander pair.
In this paper, 
we show that 
a linear extension of a multiple conjugation quandle can be described by using a pair of maps called an MCQ Alexander pair, 
where a multiple conjugation quandle is an algebra whose axioms are motivated from handlebody-knot theory.
\end{abstract}

\maketitle

\section{Introduction}

A quandle~\cite{Joyce82,Matveev82} is an algebra 
whose axioms correspond to the Reidemeister moves for knots.
Andruskiewitsch and Gra\~na~\cite{AndruskiewitschGrana03} introduced a dynamical cocycle 
to construct an extension of a quandle.
Ishii and Oshiro~\cite{IshiiOshiro19} introduced a pair of maps called an Alexander pair, 
which is a dynamical cocycle corresponding to a linear extension of a quandle. 
A linear/affine extension of a quandle plays an important role in constructing knot invariants.
For example, (twisted) Alexander invariants~\cite{Alexander28,Lin01,Wada94} 
and quandle cocycle invariants~\cite{CarterJelsovskyKamadaLangfordSaito03} 
for knots are obtained 
through the theory of quandle extensions (see \cite{IshiiOshiro19}).

A multiple conjugation quandle (MCQ)~\cite{Ishii15} is an algebra 
whose axioms correspond to the Reidemeister moves~\cite{Ishii15(2)} for handlebody-knots.
A handlebody-knot \cite{Ishii08} is a handlebody embedded in the 3-sphere $S^3$, 
which we regard as a generalization of a knot with respect to a genus.
In this paper, 
we introduce a pair of maps called an MCQ Alexander pair.
An MCQ Alexander pair is an MCQ version of an Alexander pair, 
which yields a linear extension of an MCQ.
As with quandles, 
a linear extension of an MCQ plays an important role to construct handlebody-knot invariants.
Actually, 
we can obtain some handlebody-knot invariants by using MCQ Alexander pairs (\cite{IshiiMurao19}).
In this paper, 
we investigate a linear extension of an MCQ with a quadruple of maps.

Any linear extension of a quandle can be realized by using a pair of maps, an Alexander pair.
On the other hand, 
an MCQ is a quandle consisting of a union of groups 
with the quandle operation restricting to conjugation on each group component.
Then any linear extension of an MCQ can be realized by using a quadruple of maps, an ``Alexander quadruple".
However, 
the quadruple has a complicated structure, 
and it is not easy to be handled.
In this paper, 
we show that 
the quadruple of maps which gives a linear extension of an MCQ can be reduced
to some MCQ Alexander pair modulo isomorphism.
That is, 
any linear extension of an MCQ 
can be realized using some MCQ Alexander pair 
up to isomorphism.

This paper is organized into four sections.
In Section~\ref{sec:Multiple conjugation quandles and MCQ Alexander pairs}, 
we recall the notion of a multiple conjugation quandle (MCQ) and introduce an MCQ Alexander pair.
We see that it is related to an extension of an MCQ.
In Section~\ref{sec:Linear extensions of multiple conjugation quandles}, 
we consider linear extensions of MCQs.
We give a quadruple of maps 
corresponding to a linear extension of an MCQ.
In Section~\ref{sec:The reduction of linear extensions of MCQs to MCQ Alexander pairs},
we show that 
any linear extension of an MCQ 
can be realized by using an MCQ Alexander pair 
up to isomorphism.

\section{Multiple conjugation quandles and MCQ Alexander pairs}
\label{sec:Multiple conjugation quandles and MCQ Alexander pairs}

A \textit{quandle}~\cite{Joyce82,Matveev82} is a non-empty set $Q$ 
equipped with a binary operation $\triangleleft:Q\times Q\to Q$ satisfying the following axioms:
\begin{itemize}
\item[(Q1)]
For any $a\in Q$, $a\triangleleft a=a$.
\item[(Q2)]
For any $a\in Q$, the map $\triangleleft a:Q\to Q$ defined by $\triangleleft a(x)=x\triangleleft a$ is bijective.
\item[(Q3)]
For any $a,b,c\in Q$, $(a\triangleleft b)\triangleleft c=(a\triangleleft c)\triangleleft(b\triangleleft c)$.
\end{itemize}
We denote $(\triangleleft a)^n:Q\to Q$ by $\triangleleft^n a$ for $n\in\mathbb{Z}$.
In the following, 
we see some examples of quandles.

\begin{example}
\begin{enumerate}
\item
Let $G$ be a group.
We define a binary operation $\triangleleft$ on $G$ by $a\triangleleft b:=b^{-1}ab$.
Then, $(G,\triangleleft)$ is a quandle.
We call it the \textit{conjugation quandle} of $G$ and denote it by $\operatorname{Conj}G$.

\item
For a positive integer $n$, we denote by $\mathbb{Z}_n$ the cyclic group $\mathbb{Z}/n\mathbb{Z}$ of order $n$.
We define a binary operation $\triangleleft$ on $\mathbb{Z}_n$ by $a\triangleleft b:=2b-a$.
Then, $(\mathbb{Z}_n,\triangleleft)$ is a quandle.
We call it the \textit{dihedral quandle} of order $n$ and denote it by $R_n$.

\item
Let $Q$ be an $R[t^{\pm 1}]$-module, where $R$ is a commutative ring. 
For any $a,b \in Q$, we define a binary operation $\triangleleft$ on $Q$ by $a \tri b := ta+(1-t)b$.
Then $Q$ is a quandle, called an \textit{Alexander quandle}.
\end{enumerate}
\end{example}

For quandles $(Q_1,\triangleleft_1)$ and $(Q_2,\triangleleft_2)$, 
a \textit{quandle homomorphism} $f:Q_1\to Q_2$ is defined 
to be a map $f:Q_1\to Q_2$ satisfying $f(a\triangleleft_1 b)=f(a)\triangleleft_2 f(b)$ for any $a,b\in Q_1$.
We call a bijective quandle homomorphism an \textit{quandle isomorphism}.
$Q_1$ and $Q_2$ are \textit{isomorphic}, denoted $Q_1 \cong Q_2$, 
if there exists an quandle isomorphism from $Q_1$ to $Q_2$.

We define the \textit{type} of a quandle $Q$, denoted $\type Q$, by 
\[\type Q:=\min \{ n \in \mathbb{Z}_{>0} \mid a \tri^n b=a ~(\text{for any~} a,b \in Q) \},\]
where we set $\min \emptyset :=\infty$ for the empty set $\emptyset$.
We note that $(Q,\tri^i)$ is also a quandle for any $i \in \mathbb{Z}$, 
and any finite quandle is of finite type.
For a quandle $Q$, 
an \textit{extension} of $Q$ is a quandle $\widetilde{Q}$ 
which has a surjective homomorphism $f : \widetilde{Q} \to Q$ 
such that for any element of $Q$, 
the cardinality of the inverse image by $f$ is constant.
See also \cite{ElhamdadiNelson15,Nosaka17} for more details on quandles.

\begin{definition}[\cite{Ishii15}]
A \textit{multiple conjugation quandle (MCQ)} $X$ 
is a disjoint union of groups $G_{\lambda} (\lambda \in \Lambda)$ 
with a binary operation $\tri : X \times X \to X$ 
satisfying the following axioms:
\begin{itemize}
\item
For any $a,b \in G_\lambda$, 
$a\tri b = b^{-1}ab$.
\item
For any $x \in X$ and $a,b \in G_\lambda$, 
$x \tri e_\lambda = x$ and $x \tri (ab)=(x \tri a) \tri b$, 
where $e_\lambda$ is the identity of $G_\lambda$.
\item
For any $x,y,z \in X$, 
$(x \tri y) \tri z=(x \tri z) \tri (y \tri z)$.
\item
For any $x \in X$ and $a,b \in G_\lambda$, 
$(ab) \tri x=(a \tri x)(b \tri x)$, 
where $a \tri x, b \tri x \in G_\mu$ for some $\mu \in \Lambda$.
\end{itemize}
\end{definition}

In this paper, we often omit brackets.
When doing so, we apply binary operations from left on expressions, 
except for group operations, 
which we always apply first.
For example, 
we write $a \triangleleft_1 b \triangleleft_2 cd \triangleleft_3 (e \triangleleft_4 f \triangleleft_5 g)$ 
for $((a \triangleleft_1 b) \triangleleft_2 (cd)) \triangleleft_3 ((e \triangleleft_4 f) \triangleleft_5 g)$ simply, 
where each $\triangleleft_i$ is a binary operation, 
and $c$ and $d$ are elements of the same group.
Throughout this paper, 
unless otherwise specified, 
we assume that each $G_\lambda$ is a group 
when $\bigsqcup_{\lambda \in \Lambda}G_\lambda$ is an MCQ.
We denote by $G_a$ the group $G_\lambda$ containing $a \in X$.
We also denote by $e_\lambda$ the identity of $G_\lambda$.
Then the identity of $G_a$ is denoted by $e_a$ for any $a \in X$.

We remark that an MCQ itself is a quandle.
For two MCQs $X_1=\bigsqcup_{\lambda \in \Lambda}G_\lambda$ and $X_2=\bigsqcup_{\mu \in M}G_\mu$, 
an \textit{MCQ homomorphism} $f : X_1 \to X_2$ is defined to be a map 
from $X_1$ to $X_2$ satisfying $f(x \tri y)=f(x) \tri f(y)$ for any $x,y \in X_1$ 
and $f(ab)=f(a)f(b)$ for any $\lambda \in \Lambda$ and $a,b \in G_\lambda$.
We call a bijective MCQ homomorphism an \textit{MCQ isomorphism}.
$X_1$ and $X_2$ are \textit{isomorphic}, denoted $X_1 \cong X_2$, 
if there exists an MCQ isomorphism from $X_1$ to $X_2$.

For an MCQ $X=\bigsqcup_{\lambda \in \Lambda}G_\lambda$, 
an \textit{extension} of $X$ is an MCQ $\widetilde{X}$ 
which has a surjective MCQ homomorphism $f: \widetilde{X} \to X$
such that 
for any element of $X$, 
the cardinality of the inverse image by $f$ is constant.
Then we have the following proposition.

\begin{proposition}\label{prop:MCQ extension}
Let  $X=\bigsqcup_{\lambda \in \Lambda}G_\lambda$ and $\widetilde{X}$ be MCQs.
Then $\widetilde{X}$ is an extension of $X$ 
if and only if 
there exists a set $A$ 
such that 
$\bigcup_{\lambda \in \Lambda}(G_\lambda \times A)$ is an MCQ which is isomorphic to $\widetilde{X}$, 
and that the projection $\pr_X : \bigcup_{\lambda \in \Lambda}(G_\lambda \times A) \to X$ sending $(x,u)$ to $x$ 
is an MCQ homomorphism.
\end{proposition}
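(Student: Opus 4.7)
My plan is to prove the two directions separately. For the ``if'' direction I am given the set $A$, an MCQ structure on $\bigcup_{\lambda}(G_\lambda \times A)$, and an MCQ isomorphism $\Phi : \widetilde{X} \to \bigcup_{\lambda}(G_\lambda \times A)$ under which $\pr_X$ is an MCQ homomorphism. I will simply set $f := \pr_X \circ \Phi$. This is then a surjective MCQ homomorphism, and for every $x \in X$ one has $|f^{-1}(x)| = |\Phi^{-1}(\{x\} \times A)| = |A|$, so the fiber cardinality is constant, which makes $\widetilde{X}$ an extension of $X$.

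For the converse, I will start from a surjective MCQ homomorphism $f : \widetilde{X} \to X$ whose fibers all have the same cardinality. I take $A$ to be any set of that cardinality and, invoking the axiom of choice, fix a bijection $\phi_x : f^{-1}(x) \to A$ for every $x \in X$. Then I define
\[
\Phi : \widetilde{X} \longrightarrow \bigcup_{\lambda \in \Lambda}(G_\lambda \times A), \qquad \Phi(\widetilde{x}) := \bigl(f(\widetilde{x}),\, \phi_{f(\widetilde{x})}(\widetilde{x})\bigr).
\]
This map restricts on each fiber $f^{-1}(x)$ to the bijection $\widetilde{x} \mapsto (x, \phi_x(\widetilde{x}))$ onto $\{x\} \times A$, and is therefore a global bijection. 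Transporting the quandle operation and the group operations of $\widetilde{X}$ along $\Phi$ equips the underlying set $\bigcup_\lambda (G_\lambda \times A)$ with an MCQ structure making $\Phi$ an MCQ isomorphism, and from the identity $\pr_X \circ \Phi = f$ one reads off directly that $\pr_X$ is an MCQ homomorphism.

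The only point that requires a moment's thought is checking that the MCQ structure so obtained really is of the stated shape, in the sense that its group components lie inside the subsets $G_\lambda \times A$ for various $\lambda$. I will justify this by noting that the MCQ homomorphism $f$ must send each group component $\widetilde{G}_\alpha$ of $\widetilde{X}$ into a single group component $G_{\lambda(\alpha)}$ of $X$, because an MCQ homomorphism preserves group operations and those in $X$ are defined only inside individual $G_\lambda$. It follows that $\Phi(\widetilde{G}_\alpha) \subseteq G_{\lambda(\alpha)} \times A$, so the transported group components partition $\bigcup_\lambda (G_\lambda \times A)$ in exactly the way the proposition requires. I do not anticipate any serious obstacle; once the bijections $\phi_x$ are chosen, the remainder of the argument is pure bookkeeping.
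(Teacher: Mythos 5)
Your proof is correct and follows essentially the same route as the paper: the ``if'' direction via $f:=\pr_X\circ\Phi$, and the converse by choosing fiberwise bijections onto a fixed set $A$ (the paper takes $A=f^{-1}(x_0)$ for a fixed $x_0$), assembling them into a global bijection $\Phi(\widetilde{x})=(f(\widetilde{x}),\phi_{f(\widetilde{x})}(\widetilde{x}))$, and transporting the MCQ structure so that $\pr_X=f\circ\Phi^{-1}$. Your closing observation that each group component of $\widetilde{X}$ must land inside a single $G_\lambda\times A$ is a worthwhile detail that the paper leaves implicit.
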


\begin{proof}
Assume that $\widetilde{X}$ is an extension of an MCQ $X=\bigsqcup_{\lambda \in \Lambda}G_\lambda$.
There exists a surjective MCQ homomorphism $f: \widetilde{X} \to X$
such that 
for any $x,y \in X$, the cardinalities of $f^{-1}(x)$ and $f^{-1}(y)$ coincide.
Here we note that $\widetilde{X}=\bigcup_{x \in X}f^{-1}(x)$.
Fix $x_0 \in X$ and put $\widetilde{X}=\bigsqcup_{\mu \in M}H_\mu$, 
where $H_\mu$ is a group for each $\mu \in M$.
For any $x\in X$, there exists a bijective map $\phi_x:f^{-1}(x) \to f^{-1}(x_0)$.
We define the map $\phi : \widetilde{X} \to \bigcup_{\lambda \in \Lambda}(G_\lambda \times f^{-1}(x_0))$ by 
$\phi(w):=(f(w),\phi_{f(w)}(w))$.
It is easy to see that $\phi$ is a bijection.
Hence $\bigcup_{\lambda \in \Lambda}(G_\lambda \times f^{-1}(x_0))=\bigsqcup_{\mu \in M}\phi(H_\mu)$ is an MCQ 
with $a \tri b :=\phi(\phi^{-1}(a) \tri \phi^{-1}(b))$ for any $a,b \in \bigcup_{\lambda \in \Lambda}(G_\lambda \times f^{-1}(x_0))$ 
and $ab:=\phi(\phi^{-1}(a) \phi^{-1}(b))$ for any $a,b \in \phi(H_\mu)$.
Then $\phi$ is clearly an MCQ isomorphism.
For the projection $\pr_X : \bigcup_{\lambda \in \Lambda}(G_\lambda \times f^{-1}(x_0)) \to X$ sending $(x,u)$ to $x$, 
it follows $\pr_X=f \circ \phi^{-1}$, 
which implies that $\pr_X$ is an MCQ homomorphism.

Conversely, 
assume that 
there exists a set $A$ such that 
$\bigcup_{\lambda \in \Lambda}(G_\lambda \times A)$ is an MCQ which is isomorphic to $\widetilde{X}$, 
and that the projection $\pr_X : \bigcup_{\lambda \in \Lambda}(G_\lambda \times A) \to X$ sending $(x,u)$ to $x$ 
is an MCQ homomorphism.
There exists an MCQ isomorphism $\phi: \widetilde{X} \to \bigcup_{\lambda \in \Lambda}(G_\lambda \times A)$.
We put $f:=\pr_X \circ \phi$.
Then $f$ is a surjective MCQ homomorphism from $\widetilde{X}$ to $X$ 
such that 
for any element of $X$, 
the cardinality of the inverse image by $f$ is constant.
Therefore $\widetilde{X}$ is an extension of $X$.
\end{proof}

Next, 
we recall the definition of a $G$-family of quandles, 
which is an algebraic system yielding an MCQ.

\begin{definition}[\cite{IshiiIwakiriJangOshiro13}]
Let $G$ be a group with the identity $e$.
A \textit{$G$-family of quandles} is a non-empty set $X$ 
with a family of binary operations $ \tri ^g:X \times X \to X ~(g \in G)$ 
satisfying the following axioms: 
\begin{itemize}
\item
For any $x \in X$ and $g \in G$, 
$x \tri ^gx=x.$
\item
For any $x,y \in X$ and $g,h \in G$, 
$x \tri ^ey=x$ and $x \tri ^{gh}y=(x \tri ^gy) \tri ^hy$.
\item
For any $x,y,z \in X$ and $g,h \in G$, 
$(x \tri ^gy) \tri ^hz=(x \tri ^hz) \tri ^{h^{-1}gh}(y \tri ^hz)$.
\end{itemize}
\end{definition}

Let $R$ be a ring and $G$ be a group with the identity $e$.
Let $X$ be a right $R[G]$-module, where $R[G]$ is the group ring of $G$ over $R$.
Then $(X,\{  \tri ^g \}_{g \in G})$ is a $G$-family of quandles, called a \textit{$G$-family of Alexander quandles}, 
with $x \tri ^gy=xg+y(e-g)$~\cite{IshiiIwakiriJangOshiro13}.
Let $(X, \tri )$ be a quandle 
and put $k:=\type X$.
Then $(X,\{ \tri ^i\}_{i \in \mathbb{Z}_{k}})$ is a $\mathbb{Z}_{k}$-family of quandles, 
where we put $\mathbb{Z}_\infty:=\mathbb{Z}$.
In particular, 
when $X$ is an Alexander quandle, 
$(X,\{ \tri ^i\}_{i \in \mathbb{Z}_{k}})$ is called a \textit{$\mathbb{Z}_{k}$-family of Alexander quandles}.

Let $(X,\{  \tri ^g \}_{g \in G})$ be a $G$-family of quandles.
Then $G \times X =\bigsqcup_{x \in X} (G \times \{ x \})$ is an MCQ with 
\begin{align*}
(g,x) \tri (h,y):=(h^{-1}gh,x \tri ^hy),
\hspace{7mm}
(g,x)(h,x):=(gh,x)
\end{align*}
for any $x,y \in X$ and $g,h \in G$~\cite{Ishii15}.
We call it the \textit{associated MCQ} 
of  $(X,\{  \tri ^g \}_{g \in G})$.
The associated MCQ $G \times X$ of a $G$-family of quandles $X$ 
is an extension of an MCQ~$G$.

Throughout this paper, unless otherwise stated, 
we assume that every ring has the multiplicative identity $1 \neq 0$.
For a ring $R$, 
we denote by $R^\times$ the group of units of $R$.
In the following, 
we introduce a pair of maps, called an MCQ Alexander pair, 
which corresponds to a linear extension of an MCQ 
as seen in Proposition~\ref{prop:MCQ Alexander pair}.

\begin{definition}
Let $X=\bigsqcup_{\lambda \in \Lambda}G_\lambda$ be an MCQ 
and $R$ a ring.
The pair $(f_1,f_2)$ of maps $f_1,f_2 : X \times X \to R$ 
is an \textit{MCQ Alexander pair} 
if $f_1$ and $f_2$ satisfy the following conditions: 

\begin{itemize} 
\item
For any $a,b \in G_\lambda$,
\begin{align*}
f_1(a,b)+f_2(a,b)=f_1(a,a^{-1}b).
\end{align*}

\item
For any $a,b \in G_\lambda$ and $x \in X$, 
\begin{align*}
&f_1(a,x)=f_1(b,x),\\
&f_2(ab,x)=f_2(a,x)+f_1(b \tri x,a^{-1} \tri x)f_2(b,x).
\end{align*}

\item
For any $x \in X$ and $a,b \in G_{\lambda}$,
\begin{align*}
&f_1(x,e_{\lambda})=1,\\
&f_1(x,ab)=f_1(x \tri a,b)f_1(x,a),\\
&f_2(x,ab)=f_1(x \tri a,b)f_2(x,a).
\end{align*}

\item
For any $x,y,z \in X$, 
\begin{align*}
&f_1(x \tri y,z)f_1(x,y)=f_1(x \tri z,y \tri z)f_1(x,z),\\
&f_1(x \tri y,z)f_2(x,y)=f_2(x \tri z,y \tri z)f_1(y,z),\\
&f_2(x \tri y,z)=f_1(x \tri z,y \tri z)f_2(x,z)+f_2(x \tri z,y \tri z)f_2(y,z).
\end{align*}
\end{itemize}
\end{definition}

By the definition of an MCQ Alexander pair, 
we have the following lemma.

\begin{lemma}\label{lem:MCQAlexanderPairProperty}
Let $X=\bigsqcup_{\lambda \in \Lambda}G_\lambda$ be an MCQ and $R$ a ring.
Let $(f_1,f_2)$ be an MCQ Alexander pair of maps $f_1,f_2 : X \times X \to R$.
For any $x,y \in X$ and $a,b \in G_\lambda$, 
the following hold.
\begin{align*}
&\text{$f_1(x,y)$ is invertible, and~} 
f_1(x,y)^{-1}=f_1(x \tri y,y^{-1}),\\
&f_2(e_\lambda,x)=0,\\
&f_1(ab,x)f_1(a,a^{-1})=f_1(b \tri x,a^{-1} \tri x)f_1(b,x),\\
&f_2(x \tri a,b)=f_2(x,ab)f_1(a,a^{-1}).
\end{align*}
\end{lemma}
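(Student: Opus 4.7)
The plan is to prove the four identities in sequence, exploiting one of the given axioms in each case via a single well-chosen substitution; Identity~1 will be used later to justify cancellations.

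For Identity~1, I would apply the axiom $f_1(x,ab)=f_1(x\tri a,b)f_1(x,a)$ with $a=y,\,b=y^{-1}$, so that $ab=e_\lambda$ and $f_1(x,e_\lambda)=1$, yielding $f_1(x\tri y,y^{-1})f_1(x,y)=1$. Exchanging the roles ($a=y^{-1},\,b=y$) and replacing $x$ by $x\tri y$, together with the MCQ identity $(x\tri y)\tri y^{-1}=x\tri(yy^{-1})=x$, gives $f_1(x,y)f_1(x\tri y,y^{-1})=1$, hence two-sided invertibility with the claimed inverse. For Identity~2, setting $a=b=e_\lambda$ in the axiom $f_2(ab,x)=f_2(a,x)+f_1(b\tri x,a^{-1}\tri x)f_2(b,x)$ leaves $f_1(e_\lambda\tri x,e_\lambda\tri x)f_2(e_\lambda,x)=0$; the MCQ axiom $(ab)\tri x=(a\tri x)(b\tri x)$ with $a=b=e_\lambda$ forces $e_\lambda\tri x$ to be an idempotent of its ambient group, hence equal to some $e_\mu$, so $f_1(e_\mu,e_\mu)=1$ and Identity~1 lets me cancel to obtain $f_2(e_\lambda,x)=0$.

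For Identity~3, I would specialise the cocycle relation $f_1(u\tri v,w)f_1(u,v)=f_1(u\tri w,v\tri w)f_1(u,w)$ by putting $u=b,\,v=a^{-1},\,w=x$. Since $b\tri a^{-1}=aba^{-1}\in G_\lambda$ and $ab\in G_\lambda$, the axiom $f_1(a,x)=f_1(b,x)$ for same-group first arguments rewrites $f_1(aba^{-1},x)$ as $f_1(ab,x)$ and $f_1(b,a^{-1})$ as $f_1(a,a^{-1})$, producing the claim directly.

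Identity~4 is the hardest and I would first reduce to the case $b=e_\lambda$: setting $a=e_\lambda$ in $f_2(x,ab)=f_1(x\tri a,b)f_2(x,a)$ yields the key formula $f_2(x,b)=f_1(x,b)f_2(x,e_\lambda)$, valid for every $x\in X$ and $b\in G_\lambda$. Applying this to the left-hand side and the same axiom to $f_2(x,ab)$ on the right, the common invertible factor $f_1(x\tri a,b)$ cancels, reducing the claim to $f_2(x\tri a,e_\lambda)=f_2(x,a)f_1(a,a^{-1})$. For this, I would specialise the axiom $f_2(u\tri v,w)=f_1(u\tri w,v\tri w)f_2(u,w)+f_2(u\tri w,v\tri w)f_2(v,w)$ at $v=a,\,w=e_\lambda$; using $\cdot\tri e_\lambda=\cdot$, this becomes
\[f_2(x\tri a,e_\lambda)=f_1(x,a)f_2(x,e_\lambda)+f_2(x,a)f_2(a,e_\lambda).\]
From the first axiom $f_1(a,b)+f_2(a,b)=f_1(a,a^{-1}b)$ with $b=e_\lambda$ I extract $f_2(a,e_\lambda)=f_1(a,a^{-1})-1$; substituting this together with $f_1(x,a)f_2(x,e_\lambda)=f_2(x,a)$ telescopes the expression to $f_2(x,a)f_1(a,a^{-1})$. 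The principal obstacle is exactly this last step, where no single axiom produces $f_2(x\tri a,b)$ directly, and one must pass through the $b=e_\lambda$ reduction and then extract the factor $f_1(a,a^{-1})$ from the additive axiom.
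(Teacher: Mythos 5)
Your argument is correct, and for the first three identities it coincides with the paper's proof: invertibility of $f_1(x,y)$ comes from $f_1(x,ab)=f_1(x\tri a,b)f_1(x,a)$ with $\{a,b\}=\{y,y^{-1}\}$, the vanishing of $f_2(e_\lambda,x)$ from the product rule for $f_2(ab,x)$ with $a=b=e_\lambda$ plus invertibility, and the third identity from the exchange relation $f_1(x\tri y,z)f_1(x,y)=f_1(x\tri z,y\tri z)f_1(x,z)$ at $(b,a^{-1},x)$ combined with first-argument independence. Where you genuinely diverge is the fourth identity. The paper proves it in one chain by the conjugation substitution $b=(aba^{-1})\tri a$: it writes $f_2(x\tri a,b)=f_2(x\tri a,aba^{-1}\tri a)\,f_1(aba^{-1},a)f_1(a,a^{-1})$, absorbs the unit $f_1(aba^{-1},a)f_1(a,a^{-1})=1$, and then applies the mixed axiom $f_1(x\tri y,z)f_2(x,y)=f_2(x\tri z,y\tri z)f_1(y,z)$ followed by $f_2(x,ab)=f_1(x\tri a,b)f_2(x,a)$. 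You instead factor out $f_1(x\tri a,b)$ on both sides via the identity $f_2(y,b)=f_1(y,b)f_2(y,e_\lambda)$, reduce to $f_2(x\tri a,e_\lambda)=f_2(x,a)f_1(a,a^{-1})$, and close this with the additive axiom for $f_2(x\tri y,z)$ at $z=e_\lambda$ together with $f_2(a,e_\lambda)=f_1(a,a^{-1})-1$ extracted from the first axiom. Both routes are sound and of comparable length; yours has the small advantage of isolating the reusable normalization $f_2(y,b)=f_1(y,b)f_2(y,e_\lambda)$ and avoiding the conjugation trick, while the paper's is a more direct single computation that does not need the first axiom $f_1(a,b)+f_2(a,b)=f_1(a,a^{-1}b)$ at all.
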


\begin{proof}
Since for any $x,y,z \in X$ and $a,b \in G_\lambda$, 
\begin{align*}
&f_1(x \tri y,z)f_1(x,y)=f_1(x \tri z,y \tri z)f_1(x,z),\\
&f_1(x,ab)=f_1(x \tri a,b)f_1(x,a),\\
&f_2(ab,x)=f_2(a,x)+f_1(b \tri x,a^{-1} \tri x)f_2(b,x),
\end{align*}
we have that 
$f_1(x,y)^{-1}=f_1(x \tri y,y^{-1})$ and $f_2(e_\lambda,x)=0$.
For any $x\in X$ and $a,b \in G_\lambda$, 
\begin{align*}
f_1(ab,x)f_1(a,a^{-1})
&=f_1(b \tri a^{-1},x)f_1(b,a^{-1})\\
&=f_1(b \tri x,a^{-1} \tri x)f_1(b,x),\\
f_2(x \tri a,b)
&=f_2(x \tri a,aba^{-1} \tri a)f_1(aba^{-1},a)f_1(a,a^{-1})\\
&=f_1(x \tri aba^{-1}, a)f_2(x,aba^{-1})f_1(a,a^{-1})\\
&=f_2(x,ab)f_1(a,a^{-1}).
\end{align*}
\end{proof}

By the definition and Lemma~\ref{lem:MCQAlexanderPairProperty}, 
we can easily check that 
an MCQ Alexander pair is an Alexander pair~\cite{IshiiOshiro19}.
We call $(1,0)$ the \textit{trivial MCQ Alexander pair}, 
where $0$ and $1$ respectively denote the zero map and the constant map 
that sends all elements of the domain to the multiplicative identity $1$ of the ring.
An MCQ Alexander pair corresponds to an extension of an MCQ 
as shown in the following proposition.

\begin{proposition}\label{prop:MCQ Alexander pair}
Let $X=\bigsqcup_{\lambda \in \Lambda}G_\lambda$ be an MCQ and $R$ a ring.
Let $f_1,f_2: X \times X \to R$ be maps.
Then the pair $(f_1,f_2)$ is an MCQ Alexander pair 
if and only if 
$\widetilde{X}(f_1,f_2):=\bigsqcup_{\lambda \in \Lambda}(G_\lambda \times M)$ 
is an MCQ with 
\begin{align*}
& (x,u) \tri (y,v):=(x \tri y, f_1(x,y)u+f_2(x,y)v)& &((x,u),(y,v) \in \widetilde{X}(f_1,f_2)),\\
& (a,u)(b,v):=(ab,u+f_1(a,a^{-1})v)& &((a,u),(b,v) \in G_\lambda \times M)
\end{align*}
for any left $R$-module $M$.
\end{proposition}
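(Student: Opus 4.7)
The plan is to check both implications in parallel by unwinding each MCQ axiom of the candidate $\widetilde{X}(f_1,f_2)$ into scalar identities in $R$. Since both the group operation and the quandle operation of $\widetilde{X}(f_1,f_2)$ are $R$-linear in the module coordinates, every equality in $\widetilde{X}(f_1,f_2)$ splits, by comparing coefficients of the independent module entries, into a finite collection of identities in $R$; these will turn out to be precisely the MCQ Alexander pair conditions (together with the consequences collected in Lemma~\ref{lem:MCQAlexanderPairProperty}). For the converse it suffices to take $M=R$ and let the module entries vary over $R$, which forces every coefficient comparison to become a genuine equality and recovers the MCQ Alexander pair axioms as stated.

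I would first verify that each $G_\lambda\times M$ is a group with identity $(e_\lambda,0)$. The identity axiom reduces to $f_1(x,e_\lambda)=1$, and the candidate inverse $(a,u)^{-1}=(a^{-1},-f_1(a,a^{-1})^{-1}u)$ is a two-sided inverse once $f_1(a^{-1},a)=f_1(a,a^{-1})^{-1}$ is established; the latter follows from $f_1(a,x)=f_1(b,x)$ together with Lemma~\ref{lem:MCQAlexanderPairProperty}. Associativity reduces to the single scalar identity $f_1(ab,(ab)^{-1})=f_1(a,a^{-1})f_1(b,b^{-1})$, which I would obtain by applying $f_1(x,cd)=f_1(x\tri c,d)f_1(x,c)$ with $x=ab$, $c=b^{-1}$, $d=a^{-1}$ and then absorbing the surviving products via the same first-slot invariance.

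Next I would check the four MCQ axioms. The conjugation equation $(a,u)\tri(b,v)=(b,v)^{-1}(a,u)(b,v)$ in $G_\lambda\times M$, after rewriting $f_1(b^{-1},b)$ and $f_1(b^{-1}a,a^{-1}b)$ via first-slot invariance, collapses to $f_2(a,b)=f_1(a,a^{-1}b)-f_1(a,b)$, which is the first MCQ Alexander pair condition. The axiom $(x,u)\tri(e_\lambda,0)=(x,u)$ is immediate from $f_1(x,e_\lambda)=1$. The axiom $(x,u)\tri((a,u')(b,v'))=((x,u)\tri(a,u'))\tri(b,v')$ yields, by comparing coefficients of $u$, $u'$, and $v'$, the second and third relations of the third bullet together with the Lemma~\ref{lem:MCQAlexanderPairProperty} identity $f_2(x\tri a,b)=f_2(x,ab)f_1(a,a^{-1})$. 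The self-distributivity $((x,u)\tri(y,v))\tri(z,w)=((x,u)\tri(z,w))\tri((y,v)\tri(z,w))$ gives three simultaneous identities matching the three relations of the fourth bullet. Finally, $((a,u)(b,v))\tri(x,u')=((a,u)\tri(x,u'))((b,v)\tri(x,u'))$ yields the two relations of the second bullet once one uses $(a\tri x)^{-1}=a^{-1}\tri x$ and the first relation of the fourth bullet (with $y=a^{-1}$, $z=x$) to rewrite $f_1(a\tri x,a^{-1}\tri x)=f_1(a,a^{-1})$.

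The main obstacle is bookkeeping rather than insight: at several points one must rewrite terms like $f_1(a\tri x,(a\tri x)^{-1})$, $f_1(ab,y)$, or $f_1(b^{-1}a,a^{-1}b)$ by combining the first-slot invariance $f_1(a,x)=f_1(b,x)$, the inverse formula $f_1(x,y)^{-1}=f_1(x\tri y,y^{-1})$ from Lemma~\ref{lem:MCQAlexanderPairProperty}, and the MCQ identity $(a\tri x)^{-1}=a^{-1}\tri x$. With these reductions in hand, each MCQ axiom of $\widetilde{X}(f_1,f_2)$ collapses onto a subset of the MCQ Alexander pair conditions; conversely, taking $M=R$ and varying the module entries independently forces each MCQ Alexander pair condition to appear via one of these coefficient comparisons, giving the claimed equivalence.
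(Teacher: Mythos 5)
Your overall strategy is the same as the paper's: expand each group/MCQ axiom of $\widetilde{X}(f_1,f_2)$, compare coefficients of the module entries, and for the converse take $M=R$ so that the coefficient comparisons become genuine identities in $R$. Most of the individual reductions you describe (the identity $(e_\lambda,0)$, the inverse, associativity reducing to $f_1(ab,b^{-1}a^{-1})=f_1(a,a^{-1})f_1(b,b^{-1})$, the conjugation axiom collapsing to $f_1(a,b)+f_2(a,b)=f_1(a,a^{-1}b)$ after first-slot invariance) check out and match the paper's computation.

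There is, however, one concrete step that is wrong as stated: the claim that $f_1(a\tri x,a^{-1}\tri x)=f_1(a,a^{-1})$ follows from the first relation of the fourth bullet with $y=a^{-1}$, $z=x$. That substitution gives
\begin{equation*}
f_1(a\tri a^{-1},x)\,f_1(a,a^{-1})=f_1(a\tri x,a^{-1}\tri x)\,f_1(a,x),
\end{equation*}
and since $a\tri a^{-1}=a$ this only yields $f_1(a\tri x,a^{-1}\tri x)=f_1(a,x)f_1(a,a^{-1})f_1(a,x)^{-1}$, a conjugate rather than an equality; the paper's ring $R$ is not assumed commutative, so you cannot cancel $f_1(a,x)$ from the wrong side. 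Fortunately the identity you invoke is not actually needed. To pass from the coefficient-of-$w$ identity $f_2(ab,x)=f_2(a,x)+f_1(a\tri x,a^{-1}\tri x)f_2(b,x)$ to the second-bullet form, you only need $f_1(a\tri x,a^{-1}\tri x)=f_1(b\tri x,a^{-1}\tri x)$, which is first-slot invariance in $G_\mu$ (this is exactly how the paper derives its equation (10') from (9) and (10)). Likewise, in the forward direction the coefficient-of-$v$ identity $f_1(ab,x)f_1(a,a^{-1})=f_1(a\tri x,a^{-1}\tri x)f_1(b,x)$ should be verified as in Lemma~\ref{lem:MCQAlexanderPairProperty}, via $f_1(ab,x)f_1(a,a^{-1})=f_1(b\tri a^{-1},x)f_1(b,a^{-1})=f_1(b\tri x,a^{-1}\tri x)f_1(b,x)$, not via the false commutation. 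With that step repaired, your argument coincides with the paper's proof.
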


\begin{proof}
If $(f_1,f_2)$ is an MCQ Alexander pair, 
then we have that $\widetilde{X}(f_1,f_2)=\bigsqcup_{\lambda \in \Lambda}(G_\lambda \times M)$ 
is an MCQ for any left $R$-module $M$ 
by direct calculation and by Lemma~\ref{lem:MCQAlexanderPairProperty}.
Here, the identity of $G_\lambda \times M$ is $(e_\lambda,0)$, 
and the inverse of $(a,u)$ is $(a^{-1},-f_1(a,a)u)$ 
for any $(a,u) \in G_\lambda \times M$.

Put $M:=R$.
Assume that $\widetilde{X}(f_1,f_2)=\bigsqcup_{\lambda \in \Lambda}(G_\lambda \times M)$ is an MCQ.
Then we prove that $(f_1,f_2)$ is an MCQ Alexander pair.
For each $\lambda \in \Lambda$, $G_\lambda \times M$ is a group.
Hence for any $(a,u), (b,v), (c,w) \in G_\lambda \times M$, it follows that
\begin{align*}
((a,u)(b,v))(c,w)
&= (ab,u+f_1(a,a^{-1})v)(c,w)\\
&= (abc,u+f_1(a,a^{-1})v+f_1(ab,b^{-1}a^{-1})w),\\
(a,u)((b,v)(c,w))
&= (a,u)(bc,v+f_1(b,b^{-1})w)\\
&= (abc,u+f_1(a,a^{-1})(v+f_1(b,b^{-1})w)).
\end{align*}
By the associativity of $G_\lambda \times M$, 
we have that for any $a,b \in G_\lambda$, 
\begin{align}
f_1(a,a^{-1})f_1(b,b^{-1})=f_1(ab,b^{-1}a^{-1}).
\end{align}

For any $(a,u), (b,v) \in G_\lambda \times M$, $(a,u) \tri (b,v)=(b,v)^{-1}(a,u)(b,v)$.
It follows that 
\begin{align*}
(a,u) \tri (b,v)
&= (b^{-1}ab,f_1(a,b)u+f_2(a,b)v),\\
(b,v)^{-1}(a,u)(b,v)
&= (b^{-1},-f_1(b,b)v)(ab,u+f_1(a,a^{-1})v)\\
&= (b^{-1}ab,-f_1(b,b)v+f_1(b^{-1},b)(u+f_1(a,a^{-1})v)).
\end{align*}
Hence we have that for any $a,b \in G_\lambda$, 
\begin{align}
f_1(a,b) &=f_1(b^{-1},b), \notag\\
f_2(a,b) &=-f_1(b,b)+f_1(b^{-1},b)f_1(a,a^{-1}).
\end{align}

For any $(x,u) \in \widetilde{X}(f_1,f_2)$ and $(a,v), (b,w) \in G_{\lambda} \times M$, 
$(x,u) \tri (e_{\lambda},0)=(x,u)$ and $(x,u) \tri ((a,v)(b,w))=((x,u) \tri (a,v)) \tri (b,w)$.
It follows that 
\begin{align*}
(x,u) \tri (e_{\lambda},0) &=(x,f_1(x,e_{\lambda})u),\\
(x,u) \tri ((a,v)(b,w))
&= (x,u) \tri (ab,v+f_1(a,a^{-1})w)\\
&= (x \tri ab,f_1(x,ab)u+f_2(x,ab)(v+f_1(a,a^{-1})w)),\\
((x,u) \tri (a,v)) \tri (b,w)
&= (x \tri a,f_1(x,a)u+f_2(x,a)v) \tri (b,w)\\
&= ((x \tri a) \tri b,f_1(x \tri a,b)(f_1(x,a)u+f_2(x,a)v)+f_2(x \tri a,b)w).
\end{align*}
Hence we have that for any $x \in X$ and $a,b \in G_{\lambda}$, 
\begin{align}
&f_1(x,e_{\lambda}) =1,\\
&f_1(x,ab) = f_1(x \tri a,b)f_1(x,a), \\ 
&f_2(x,ab) = f_1(x \tri a,b)f_2(x,a),\\
&f_2(x,ab)f_1(a,a^{-1})=f_2(x \tri a,b). \notag
\end{align}

For any $(x,u), (y,v), (z,w) \in \widetilde{X}(f_1,f_2)$, 
$((x,u) \tri (y,v)) \tri (z,w)=((x,u) \tri (z,w)) \tri ((y,v) \tri (z,w))$.
It follows that 
\begin{align*}
& ((x,u) \tri (y,v)) \tri (z,w) \\
&= (x \tri y,f_1(x,y)u+f_2(x,y)v) \tri (z,w)\\
&= ((x \tri y) \tri z,f_1(x \tri y,z)(f_1(x,y)u+f_2(x,y)v)+f_2(x \tri y,z)w),\\
& ((x,u) \tri (z,w)) \tri ((y,v) \tri (z,w))\\
&=(x \tri z, f_1(x,z)u+f_2(x,z)w) \tri (y \tri z, f_1(y,z)v+f_2(y,z)w)\\
&=((x \tri z)\tri (y \tri z), f_1(x \tri z, y \tri z)( f_1(x,z)u+f_2(x,z)w)\\
& \quad +f_2(x \tri z, y \tri z)(f_1(y,z)v+f_2(y,z)w)).
\end{align*}
Hence we have that for any $x,y,z \in X$, 
\begin{align}
&f_1(x \tri y,z)f_1(x,y) = f_1(x \tri z, y \tri z)f_1(x,z), \\
&f_1(x \tri y,z)f_2(x,y) = f_2(x \tri z, y \tri z)f_1(y,z), \\
&f_2(x \tri y,z) =  f_1(x \tri z, y \tri z)f_2(x,z)+f_2(x \tri z, y \tri z)f_2(y,z).
\end{align}

For any $(a,u), (b,v) \in G_\lambda \times M$ and $(x,w) \in \widetilde{X}(f_1,f_2)$, 
$((a,u)(b,v)) \tri (x,w)=((a,u) \tri (x,w))((b,v) \tri (x,w))$, 
where we note that 
$(a,u) \tri (x,w), (b,v) \tri (x,w) \in G_{\mu} \times M$ for some $\mu \in \Lambda$.
It follows that 
\begin{align*}
&((a,u)(b,v)) \tri (x,w)\\
&=(ab,u+f_1(a,a^{-1})v) \tri (x,w)\\
&=(ab \tri x,f_1(ab,x)(u+f_1(a,a^{-1})v) + f_2(ab,x)w),\\
&((a,u) \tri (x,w))((b,v) \tri (x,w)) \\
&=(a \tri x, f_1(a,x)u+f_2(a,x)w)(b \tri x, f_1(b,x)v+f_2(b,x)w)\\
&=((a \tri x)(b \tri x),f_1(a,x)u+f_2(a,x)w\\
&\quad +f_1(a \tri x,a^{-1} \tri x)(f_1(b,x)v+f_2(b,x)w)).
\end{align*}
Hence we have that for any $a,b \in G_\lambda$ and $x \in X$, 
\begin{align}
&f_1(ab,x)=f_1(a,x),\\
&f_1(ab,x)f_1(a,a^{-1})=f_1(a \tri x,a^{-1} \tri x)f_1(b,x), \notag \\
&f_2(ab,x)=f_2(a,x)+f_1(a \tri x,a^{-1} \tri x)f_2(b,x).
\end{align}

By equations (1), (2), (9) and (10), 
we have that for any $a,b \in G_\lambda$ and $x \in X$, 
\begin{align}
&f_1(a,b)+f_2(a,b)=f_1(a,a^{-1}b), \tag{2'}\\
&f_1(a,x)=f_1(b,x), \tag{9'}\\
&f_2(ab,x)=f_2(a,x)+f_1(b \tri x,a^{-1} \tri x)f_2(b,x). \tag{10'}
\end{align}
Therefore, 
by the equations (2'), (3)--(8), (9') and (10'), 
the pair $(f_1,f_2)$ is an MCQ Alexander pair.
\end{proof}

We remark that 
the MCQ $\widetilde{X}(f_1,f_2)=\bigsqcup_{\lambda \in \Lambda}(G_\lambda \times M)$ 
in Proposition~\ref{prop:MCQ Alexander pair} is an extension of $X$ 
since the projection $\pr_X:\widetilde{X}(f_1,f_2) \to X$ sending $(x,u)$ to $x$ 
satisfies the defining condition of an extension.

We give some examples of MCQ Alexander pairs.

\begin{example}\label{ex:MCQ Alexander pair of Alexander invariant}
Let $R$ be a ring and $G_0$ be the abelian group 
\[\left\langle t_1,\ldots, t_r \,\middle| \, t_1^{k_1},\ldots, t_r^{k_r}, [t_i,t_j] ~(1 \leq i<j \leq r) \right\rangle\]
for some $k_1, \ldots, k_r \in \z_{\geq 0}$, 
where $[t_i,t_j]$ indicates the commutator of $t_i$ and $t_j$.
We remark that 
the group ring $R[G_0]$ may be identified with the quotient ring of Laurent polynomial ring 
$R[ t_1^{\pm 1},\ldots, t_r^{\pm 1} ]/(t_1^{k_1}-1,\ldots, t_r^{k_r}-1)$.
We set maps $f_1,f_2:G_0 \times G_0 \to R[G_0]$ by 
\begin{align*}
f_1(s,t)=t^{-1},~
f_2(s,t)=t^{-1}s-t^{-1}.
\end{align*}
Then the pair $(f_1,f_2)$ is an MCQ Alexander pair.
\end{example}


\begin{example}
Let $G$ be a group, $R$ a ring and $X=\bigsqcup_{x \in Y}(G \times \{x\})$ 
be an associated MCQ of a $G$-family of quandles $(Y,\{ \tri^g\}_{g \in G})$.
Let $f:G \to G$ be a group homomorphism.
We set maps 
$f_1,f_2:X \times X \to R[G]$
by 
\begin{align*}
f_1((a,x),(b,y))=f(b)^{-1},~
f_2((a,x),(b,y))=f(b)^{-1}(f(a)-1).
\end{align*}
Then the pair $(f_1,f_2)$ is an MCQ Alexander pair.
\end{example}

\section{Linear extensions of multiple conjugation quandles}
\label{sec:Linear extensions of multiple conjugation quandles}

Let $X=\bigsqcup_{\lambda \in \Lambda}G_\lambda$ be an MCQ and $R$ a ring.
Let $f_1,f_2: X \times X \to R$ and $f_3,f_4: \bigsqcup_{\lambda \in \Lambda}(G_\lambda \times G_\lambda) \to R$ be maps.
In this section, 
we consider a linear extension of $X$ using $f_1$, $f_2$, $f_3$ and $f_4$.

We define the conditions \eqref{eq:0-i}--\eqref{eq:4-iii} for $f_1$, $f_2$, $f_3$ and $f_4$ 
as follows:

\begin{itemize}
\item
For any $a,b,c \in G_\lambda$, 
\begin{align}
& \text{$f_3(a,b)$ and $f_4(a,b)$ are invertible}, \tag{0-i} \label{eq:0-i}\\
& f_3(ab,c)f_3(a,b)=f_3(a,bc),\tag{0-ii} \label{eq:0-ii}\\
& f_3(ab,c)f_4(a,b)=f_4(a,bc)f_3(b,c),\tag{0-iii} \label{eq:0-iii}\\
& f_4(ab,c)=f_4(a,bc)f_4(b,c)\tag{0-iv}. \label{eq:0-iv}
\end{align}

\item
For any $a,b \in G_\lambda$, 
\begin{align*}
f_1(a,b) &=f_4(b^{-1},ab)f_3(a,b),\tag{1-i} \label{eq:1-i}\\
f_2(a,b) &=-f_3(b^{-1},ab)f_4(b^{-1},e_\lambda)f_3(b,b^{-1})+f_4(b^{-1},ab)f_4(a,b). \tag{1-ii} \label{eq:1-ii}
\end{align*}

\item
For any $x \in X$ and $a,b \in G_{\lambda}$, 
\begin{align*}
&f_1(x,e_{\lambda}) =1,\tag{2-i} \label{eq:2-i}\\
&f_1(x,ab) = f_1(x \tri a,b)f_1(x,a), \tag{2-ii} \label{eq:2-ii}\\ 
&f_2(x,ab)f_3(a,b) = f_1(x \tri a,b)f_2(x,a), \tag{2-iii} \label{eq:2-iii}\\
&f_2(x,ab)f_4(a,b)=f_2(x \tri a,b). \tag{2-iv} \label{eq:2-iv}
\end{align*}

\item
For any $x,y,z \in X$, 
\begin{align*}
&f_1(x \tri y,z)f_1(x,y) = f_1(x \tri z, y \tri z)f_1(x,z), \tag{3-i} \label{eq:3-i}\\
&f_1(x \tri y,z)f_2(x,y) = f_2(x \tri z, y \tri z)f_1(y,z), \tag{3-ii} \label{eq:3-ii}\\
&f_2(x \tri y,z) =  f_1(x \tri z, y \tri z)f_2(x,z)+f_2(x \tri z, y \tri z)f_2(y,z). \tag{3-iii} \label{eq:3-iii}
\end{align*}

\item
For any $a,b \in G_\lambda$ and $x \in X$, 
\begin{align*}
&f_1(ab,x)f_3(a,b)=f_3(a \tri x,b \tri x)f_1(a,x), \tag{4-i} \label{eq:4-i}\\
&f_1(ab,x)f_4(a,b)=f_4(a \tri x,b \tri x)f_1(b,x), \tag{4-ii} \label{eq:4-ii}\\
&f_2(ab,x)=f_3(a \tri x,b \tri x)f_2(a,x)+f_4(a \tri x,b \tri x)f_2(b,x). \tag{4-iii} \label{eq:4-iii}
\end{align*}
\end{itemize}

These conditions correspond to every linear extension of an MCQ 
as seen in Proposition~\ref{Alexander quadruple}.
We remark that for any $a,b \in G_\lambda$, 
it follows 
\begin{align*}
&f_3(a,e_\lambda)=1=f_4(e_\lambda,a),\\
&f_3(a,b)^{-1}=f_3(ab,b^{-1}),\\
&f_4(a,b)^{-1}=f_4(a^{-1},ab)
\end{align*}
by \eqref{eq:0-i}, \eqref{eq:0-ii} and \eqref{eq:0-iv}.

\begin{proposition}\label{Alexander quadruple}
Let $X=\bigsqcup_{\lambda \in \Lambda}G_\lambda$ be an MCQ and $R$ a ring.
Let $f_1,f_2: X \times X \to R$ and $f_3,f_4: \bigsqcup_{\lambda \in \Lambda}(G_\lambda \times G_\lambda) \to R$ be maps.
Then $f_1$, $f_2$, $f_3$ and $f_4$ satisfy the conditions \eqref{eq:0-i}--\eqref{eq:4-iii} 
if and only if 
$\widetilde{X}(f_1,f_2,f_3,f_4):=\bigsqcup_{\lambda \in \Lambda}(G_\lambda \times M)$ 
is an MCQ with 
\begin{align*}
& (x,u) \tri (y,v):=(x \tri y, f_1(x,y)u+f_2(x,y)v)& &((x,u),(y,v) \in \widetilde{X}(f_1,f_2,f_3,f_4)),\\
& (a,u)(b,v):=(ab,f_3(a,b)u+f_4(a,b)v)& &((a,u),(b,v) \in G_\lambda \times M).
\end{align*}
 for any left $R$-module $M$.
 \end{proposition}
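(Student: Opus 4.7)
The plan is to follow the template set by Proposition~\ref{prop:MCQ Alexander pair}, adapting each step to the more general group operation twisted by the pair $(f_3,f_4)$ instead of by $(1,f_1(a,a^{-1}))$.

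For the ``if'' direction, assuming \eqref{eq:0-i}--\eqref{eq:4-iii}, I would verify each of the MCQ axioms for $\widetilde{X}(f_1,f_2,f_3,f_4)$ in turn, with an arbitrary left $R$-module $M$. First I would establish that each $G_\lambda\times M$ is a group: associativity of the twisted product follows from \eqref{eq:0-ii}--\eqref{eq:0-iv}, the identity is $(e_\lambda,0)$ (using the remarks $f_3(a,e_\lambda)=1=f_4(e_\lambda,a)$ derived after the list of conditions), and the inverse of $(a,u)$ is $(a^{-1},-f_3(a,a^{-1})^{-1}f_4(a,a^{-1})u)$, which makes sense by \eqref{eq:0-i}. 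Then I would verify the four MCQ axioms in order: the conjugation axiom $a\triangleleft b=b^{-1}ab$ on each $G_\lambda\times M$ reduces to the identities \eqref{eq:1-i} and \eqref{eq:1-ii} after unfolding $b^{-1}ab$ using the twisted product; the axioms $x\triangleleft e_\lambda=x$ and $x\triangleleft (ab)=(x\triangleleft a)\triangleleft b$ follow from \eqref{eq:2-i}--\eqref{eq:2-iv}; self-distributivity follows from \eqref{eq:3-i}--\eqref{eq:3-iii}; and $(ab)\triangleleft x=(a\triangleleft x)(b\triangleleft x)$ follows from \eqref{eq:4-i}--\eqref{eq:4-iii}.

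For the ``only if'' direction, I would specialize to $M=R$ and proceed axiom by axiom, as in the proof of Proposition~\ref{prop:MCQ Alexander pair}. Assuming $\widetilde{X}(f_1,f_2,f_3,f_4)$ is an MCQ, I would expand both sides of each MCQ axiom as elements of $G \times R$ (or a suitable product) and compare the second coordinates. Since the second coordinate of each expression is an $R$-linear combination of the free variables $u,v,w$ coming from the chosen elements, equating coefficients yields the desired conditions. Associativity of $G_\lambda\times R$ yields \eqref{eq:0-ii}--\eqref{eq:0-iv}, the existence of inverses yields \eqref{eq:0-i}, the conjugation axiom yields \eqref{eq:1-i} and \eqref{eq:1-ii}, the axioms of right multiplication yield \eqref{eq:2-i}--\eqref{eq:2-iv}, self-distributivity yields \eqref{eq:3-i}--\eqref{eq:3-iii}, and left distributivity of $\triangleleft$ over the group product yields \eqref{eq:4-i}--\eqref{eq:4-iii}.

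The main obstacle I expect is the conjugation axiom in the ``only if'' direction: since the group operation is now twisted by the pair $(f_3,f_4)$ rather than by $(1,f_1(a,a^{-1}))$, computing $(b,v)^{-1}(a,u)(b,v)$ requires expressing $(b,v)^{-1}$ explicitly and tracking several $R$-linear terms through two further twisted products. Matching the result coefficient-by-coefficient with $(a,u)\triangleleft (b,v)=(b^{-1}ab,f_1(a,b)u+f_2(a,b)v)$ is what produces the rather complicated identities \eqref{eq:1-i} and \eqref{eq:1-ii}; care must also be taken to invoke \eqref{eq:0-i}--\eqref{eq:0-iv} (already obtained from the group-axiom step) in order to simplify the coefficients into the stated form. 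All remaining calculations are routine bookkeeping.
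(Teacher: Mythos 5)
Your overall strategy coincides with the paper's: the author likewise first isolates the group structure on each $G_\lambda\times M$ (as Lemma~\ref{group condition}, showing it is equivalent to \eqref{eq:0-i}--\eqref{eq:0-iv}), and then matches each remaining MCQ axiom with the corresponding block of conditions, specializing to $M=R$ and comparing coefficients of $u,v,w$ for the converse. The correspondence you describe between axioms and condition blocks is exactly the one used in the paper.

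However, your formula for the inverse is wrong, and this is not a harmless slip because the precise shape of \eqref{eq:1-i} and \eqref{eq:1-ii} depends on it. From $(a,u)(a^{-1},w)=(e_\lambda,\,f_3(a,a^{-1})u+f_4(a,a^{-1})w)$, the second coordinate vanishes for $w=-f_4(a,a^{-1})^{-1}f_3(a,a^{-1})u$, i.e.\ the inverse is $(a^{-1},-f_4(a^{-1},e_\lambda)f_3(a,a^{-1})u)$ (using $f_4(a,a^{-1})^{-1}=f_4(a^{-1},e_\lambda)$, which follows from \eqref{eq:0-iv}). Your expression $-f_3(a,a^{-1})^{-1}f_4(a,a^{-1})u$ inverts the wrong factor and does not agree with this in general; it is also not the left-inverse formula, whose arguments would be $(a^{-1},a)$ rather than $(a,a^{-1})$. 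As written, the verification of the inverse axiom in the ``if'' direction fails, and in the ``only if'' direction the expansion of $(b,v)^{-1}(a,u)(b,v)$ would yield identities differing from \eqref{eq:1-i} and \eqref{eq:1-ii} as stated --- note that the factor $f_4(b^{-1},e_\lambda)f_3(b,b^{-1})$ appearing in \eqref{eq:1-ii} is precisely the coefficient in the correct inverse. Once this formula is corrected, the rest of your outline goes through exactly as in the paper.
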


Let us first prove the following lemma in order to prove Proposition~\ref{Alexander quadruple} later.

\begin{lemma}\label{group condition}
In the same situation as Proposition~\ref{Alexander quadruple}, 
the maps $f_3$ and $f_4$ satisfy the conditions \eqref{eq:0-i}--\eqref{eq:0-iv} 
if and only if 
$G_\lambda \times M$ is a group for each $\lambda \in \Lambda$ 
and any left $R$-module $M$.
\end{lemma}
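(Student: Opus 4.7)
The plan is to match each of the four conditions \eqref{eq:0-i}--\eqref{eq:0-iv} to one of the group axioms (associativity, two-sided identity, two-sided inverse) for the binary operation $(a,u)(b,v) = (ab, f_3(a,b)u + f_4(a,b)v)$ on $G_\lambda \times M$, with both directions reducing largely to bookkeeping on $R$-linear expressions in the second coordinate.

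For the direction from the conditions to the group structure, I would first expand $((a,u)(b,v))(c,w)$ and $(a,u)((b,v)(c,w))$: both have first coordinate $abc$, while the second coordinates are $R$-linear forms in $u, v, w$ whose coefficients are products of values of $f_3$ and $f_4$, so \eqref{eq:0-ii}, \eqref{eq:0-iii}, and \eqref{eq:0-iv} are precisely what is needed to equate the coefficients of $u$, $v$, and $w$ respectively. Next, setting $b = c = e_\lambda$ in \eqref{eq:0-ii} gives $f_3(a,e_\lambda)^2 = f_3(a,e_\lambda)$, and \eqref{eq:0-i} then forces $f_3(a,e_\lambda) = 1$; the analogous computation with \eqref{eq:0-iv} yields $f_4(e_\lambda,a) = 1$, so $(e_\lambda,0)$ is a two-sided identity. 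Finally, the invertibility of $f_4(a,a^{-1})$ guaranteed by \eqref{eq:0-i} produces a right inverse $(a^{-1}, v_R)$ with $v_R = -f_4(a,a^{-1})^{-1}f_3(a,a^{-1})u$, and invertibility of $f_3(a^{-1},a)$ produces a left inverse $(a^{-1}, v_L)$; the standard monoid identity $z = z(xy) = (zx)y = y$ applied to a right inverse $y$ and left inverse $z$ of $x=(a,u)$ forces these to coincide, giving a two-sided inverse.

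For the converse, I specialize to $M = R$. Applying associativity to the triples $(a,u),(b,0),(c,0)$, then $(a,0),(b,v),(c,0)$, and finally $(a,0),(b,0),(c,w)$ isolates one free scalar in each case, so equating second coordinates and then varying the free scalar over $R$ extracts \eqref{eq:0-ii}, \eqref{eq:0-iii}, and \eqref{eq:0-iv} respectively. For \eqref{eq:0-i}, I use that left translation by $(a,0)$ on $G_\lambda \times R$ is a bijection, so for each $b \in G_\lambda$ the restricted map $v \mapsto f_4(a,b)v$ is a bijection of $R$; surjectivity yields $s \in R$ with $f_4(a,b)s = 1$, and injectivity applied to $f_4(a,b)\bigl(sf_4(a,b)-1\bigr) = 0$ forces $sf_4(a,b) = 1$, so $f_4(a,b) \in R^\times$. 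The symmetric argument using right translation by $(b,0)$ gives $f_3(a,b) \in R^\times$.

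The main obstacle is this last step: turning bijectivity of a scalar-multiplication map on $R$ into genuine two-sided unit-hood in $R$. This is the only place where the full ring structure (not merely module-level bijectivity) is essential; without it one could only assert that $f_3$ and $f_4$ act invertibly on some given module. All the remaining verifications reduce to routine comparison of coefficients in $R$-linear expressions.
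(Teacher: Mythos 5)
Your proposal is correct and follows essentially the same route as the paper: the forward direction is a direct verification (with the same identity $(e_\lambda,0)$ and the same inverse formula), and the converse specializes to $M=R$ and reads \eqref{eq:0-ii}--\eqref{eq:0-iv} off the coefficients of $u,v,w$ in the associativity identity. The only localized difference is how \eqref{eq:0-i} is recovered in the converse: the paper first deduces $f_3(a,e_\lambda)=1=f_4(e_\lambda,a)$ from the identity element and then exhibits explicit two-sided inverses $f_3(ab,b^{-1})$ and $f_4(a^{-1},ab)$ via \eqref{eq:0-ii} and \eqref{eq:0-iv}, whereas you argue through bijectivity of left and right translations and bootstrap a one-sided inverse to a two-sided unit; both arguments are valid.
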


\begin{proof}
If $f_3$ and $f_4$ satisfy the conditions \eqref{eq:0-i}--\eqref{eq:0-iv}, 
then we have that $G_\lambda \times M$ is a group for each $\lambda \in \Lambda$ 
and any left $R$-module $M$ by direct calculation.
Here, the identity of $G_\lambda \times M$ is $(e_\lambda,0)$, 
and the inverse of $(a,u)$ is $(a^{-1},-f_4(a^{-1},e_\lambda)f_3(a,a^{-1})u)$ 
for any $(a,u) \in G_\lambda \times M$.

Put $M:=R$.
Assume that $G_\lambda \times M$ is a group for each $\lambda \in \Lambda$.
Then it follows that 
for any $(a,u),(b,v),(c,w) \in G_\lambda \times M$, 
\begin{align*}
((a,u)(b,v))(c,w)
&= (ab,f_3(a,b)u+f_4(a,b)v)(c,w)\\
&= (abc,f_3(ab,c)(f_3(a,b)u+f_4(a,b)v)+f_4(ab,c)w),\\
(a,u)((b,v)(c,w))
&= (a,u)(bc,f_3(b,c)v+f_4(b,c)w)\\
&= (abc,f_3(a,bc)u+f_4(a,bc)(f_3(b,c)v+f_4(b,c)w)).
\end{align*}
By the associativity of $G_\lambda \times M$, 
we have that 
$f_1,f_2,f_3$ and $f_4$ satisfy the conditions \eqref{eq:0-ii}, \eqref{eq:0-iii} and \eqref{eq:0-iv}.

Let $(g,m)$ be the identity of $G_\lambda \times M$.
Then for any $(a,u) \in G_\lambda \times M$, 
it follows that 
\begin{align*}
(a,u)(g,m)=(ag,f_3(a,g)u+f_4(a,g)m)=(a,u),\\
(g,m)(a,u)=(ga,f_3(g,a)m+f_4(g,a)u)=(a,u).
\end{align*}
Hence we have $g=e_\lambda$ and $f_3(a,e_\lambda)=1=f_4(e_\lambda,a)$ for any $a \in G_\lambda$.
By \eqref{eq:0-ii} and \eqref{eq:0-iv}, 
we obtain that $f_1,f_2,f_3$ and $f_4$ satisfy the condition \eqref{eq:0-i}, 
and that $f_3(a,b)^{-1}=f_3(ab,b^{-1})$ and $f_4(a,b)^{-1}=f_4(a^{-1},ab)$.
%
\end{proof}

\begin{proof}[Proof of Proposition~\ref{Alexander quadruple}]
If $f_1$, $f_2$, $f_3$ and $f_4$ satisfy the conditions \eqref{eq:0-i}--\eqref{eq:4-iii}, 
then we have that 
$\widetilde{X}(f_1,f_2,f_3,f_4)$ is an MCQ for any left $R$-module $M$ 
by direct calculation and by Lemma~\ref{group condition}.

Put $M:=R$.
Assume that 
$\widetilde{X}(f_1,f_2,f_3,f_4)=\bigsqcup_{\lambda \in \Lambda}(G_\lambda \times M)$ is an MCQ.
For each $\lambda \in \Lambda$, 
$G_\lambda \times M$ is a group.
Hence we have that $f_1,f_2,f_3$ and $f_4$ satisfy the conditions \eqref{eq:0-i}--\eqref{eq:0-iv} 
by Lemma~\ref{group condition}.

For any $(a,u), (b,v) \in G_\lambda \times M$, $(a,u) \tri (b,v)=(b,v)^{-1}(a,u)(b,v)$.
It follows that 
\begin{align*}
(a,u) \tri (b,v)
&= (a \tri b,f_1(a,b)u+f_2(a,b)v)\\
&= (b^{-1}ab,f_1(a,b)u+f_2(a,b)v),\\
(b,v)^{-1}(a,u)(b,v)
&= (b^{-1},-f_4(b^{-1},e_\lambda)f_3(b,b^{-1})v)(ab,f_3(a,b)u+f_4(a,b)v)\\
&= (b^{-1}ab,f_3(b^{-1},ab)(-f_4(b^{-1},e_\lambda)f_3(b,b^{-1})v) \\
   &\quad +f_4(b^{-1},ab)(f_3(a,b)u+f_4(a,b)v)).
\end{align*}
Hence we have that $f_1,f_2,f_3$ and $f_4$ satisfy the conditions \eqref{eq:1-i} and \eqref{eq:1-ii}.

For any $(x,u) \in \widetilde{X}(f_1,f_2,f_3,f_4)$ and $(a,v), (b,w) \in G_{\lambda} \times M$, 
$(x,u) \tri (e_{\lambda},0)=(x,u)$ and $(x,u) \tri ((a,v)(b,w))=((x,u) \tri (a,v)) \tri (b,w)$.
It follows that 
\begin{align*}
(x,u) \tri (e_{\lambda},0) &=(x,f_1(x,e_{\lambda})u),\\
(x,u) \tri ((a,v)(b,w))
&= (x,u) \tri (ab,f_3(a,b)v+f_4(a,b)w)\\
&= (x \tri ab,f_1(x,ab)u+f_2(x,ab)(f_3(a,b)v+f_4(a,b)w)),\\
((x,u) \tri (a,v)) \tri (b,w)
&= (x \tri a,f_1(x,a)u+f_2(x,a)v) \tri (b,w)\\
&= ((x \tri a) \tri b,f_1(x \tri a,b)(f_1(x,a)u+f_2(x,a)v)\\
&\quad+f_2(x \tri a,b)w).
\end{align*}
Hence we have that $f_1,f_2,f_3$ and $f_4$ satisfy the conditions \eqref{eq:2-i}--\eqref{eq:2-iv}.

For any $(x,u), (y,v), (z,w) \in \widetilde{X}(f_1,f_2,f_3,f_4)$, 
$((x,u) \tri (y,v)) \tri (z,w)=((x,u) \tri (z,w)) \tri ((y,v) \tri (z,w))$.
It follows that 
\begin{align*}
& ((x,u) \tri (y,v)) \tri (z,w) \\
&= (x \tri y,f_1(x,y)u+f_2(x,y)v) \tri (z,w)\\
&= ((x \tri y) \tri z,f_1(x \tri y,z)(f_1(x,y)u+f_2(x,y)v)+f_2(x \tri y,z)w),\\
& ((x,u) \tri (z,w)) \tri ((y,v) \tri (z,w))\\
&=(x \tri z, f_1(x,z)u+f_2(x,z)w) \tri (y \tri z, f_1(y,z)v+f_2(y,z)w)\\
&=((x \tri z)\tri (y \tri z), f_1(x \tri z, y \tri z)( f_1(x,z)u+f_2(x,z)w)\\
& \quad +f_2(x \tri z, y \tri z)(f_1(y,z)v+f_2(y,z)w)).
\end{align*}
Hence we have that $f_1,f_2,f_3$ and $f_4$ satisfy the conditions \eqref{eq:3-i}--\eqref{eq:3-iii}.

For any $(a,u), (b,v) \in G_\lambda \times M$ and $(x,w) \in \widetilde{X}(f_1,f_2,f_3,f_4)$, 
$((a,u)(b,v)) \tri (x,w)=((a,u) \tri (x,w))((b,v) \tri (x,w))$, 
where we note that 
$(a,u) \tri (x,w), (b,v) \tri (x,w) \in G_{\mu} \times M$ for some $\mu \in \Lambda$.
It follows that 
\begin{align*}
&((a,u)(b,v)) \tri (x,w)\\
&=(ab,f_3(a,b)u+f_4(a,b)v) \tri (x,w)\\
&=(ab \tri x,f_1(ab,x)(f_3(a,b)u+f_4(a,b)v) + f_2(ab,x)w),\\
&((a,u) \tri (x,w))((b,v) \tri (x,w)) \\
&=(a \tri x, f_1(a,x)u+f_2(a,x)w)(b \tri x, f_1(b,x)v+f_2(b,x)w)\\
&=((a \tri x)(b \tri x),f_3(a \tri x,b \tri x)(f_1(a,x)u+f_2(a,x)w)\\
&\quad +f_4(a \tri x,b \tri x)(f_1(b,x)v+f_2(b,x)w)).
\end{align*}
Hence we have that $f_1,f_2,f_3$ and $f_4$ satisfy the conditions \eqref{eq:4-i}--\eqref{eq:4-iii}.

This completes the proof.
\end{proof}

We remark that 
the MCQ $\widetilde{X}(f_1,f_2,f_3,f_4)=\bigsqcup_{\lambda \in \Lambda}(G_\lambda \times M)$ 
in Proposition~\ref{Alexander quadruple} 
is an extension of $X$ 
since the projection $\pr_X:\widetilde{X}(f_1,f_2,f_3,f_4) \to X$ sending $(x,u)$ to $x$ 
satisfies the defining condition of an extension.
We call it a \textit{linear extension} of $X$.

\section{The reduction of linear extensions of MCQs to MCQ Alexander pairs}
\label{sec:The reduction of linear extensions of MCQs to MCQ Alexander pairs}

In this section, 
we see that 
any quadruple of maps satisfying the conditions \eqref{eq:0-i}--\eqref{eq:4-iii} 
can be reduced to some MCQ Alexander pair.
More precisely, 
any linear extension of an MCQ can be realized by some MCQ Alexander pair up to isomorphism.

Let $X=\bigsqcup_{\lambda \in \Lambda}G_\lambda$ be an MCQ and $R$ be a ring.
Let $(f_1,f_2,f_3,f_4)$ and $(g_1,g_2,g_3,g_4)$ be quadruples of maps satisfying the conditions \eqref{eq:0-i}--\eqref{eq:4-iii}.
Then we write $(f_1,f_2,f_3,f_4) \sim (g_1,g_2,g_3,g_4)$ 
if there exists a map $h:X \to R^\times$ satisfying the following conditions:

\begin{itemize}
\item
For any $x,y \in X$, 
\begin{align*}
h(x \tri y)f_1(x,y)= g_1(x,y)h(x),\\
h(x \tri y)f_2(x,y)=g_2(x,y)h(y).
\end{align*}

\item
For any $a,b \in G_\lambda$, 
\begin{align*}
h(ab)f_3(a,b)=g_3(a,b)h(a),\\
h(ab)f_4(a,b)=g_4(a,b)h(b).
\end{align*}
\end{itemize}
Then $\sim$ is an equivalence relation 
on the set of all quadruples of maps satisfying the conditions \eqref{eq:0-i}--\eqref{eq:4-iii}.
We often write $(f_1,f_2,f_3,f_4) \sim_h (g_1,g_2,g_3,g_4)$ to specify~$h$.
This equivalence relation gives 
an isomorphic linear extensions of MCQs 
as seen in the following proposition.

\begin{proposition}\label{prop:cohomologous}
Let $X=\bigsqcup_{\lambda \in \Lambda}G_\lambda$ be an MCQ, 
$R$ a ring and $M$ a left $R$-module.
Let $(f_1,f_2,f_3,f_4)$ and $(g_1,g_2,g_3,g_4)$ be quadruples of maps satisfying the conditions \eqref{eq:0-i}--\eqref{eq:4-iii}.
If $(f_1,f_2,f_3,f_4)\sim (g_1,g_2,g_3,g_4)$, 
then there exists an MCQ isomorphism $\phi : \widetilde{X}(f_1,f_2,f_3,f_4) \to \widetilde{X}(g_1,g_2,g_3,g_4)$ 
such that $\pr_X \circ \phi=\pr_X$ 
for the projection $\pr_X: \bigsqcup_{\lambda \in \Lambda}(G_\lambda \times M) \to X$ sending $(x,u)$ to $x$.
\end{proposition}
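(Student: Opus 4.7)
The plan is to take the map $\phi : \widetilde{X}(f_1,f_2,f_3,f_4) \to \widetilde{X}(g_1,g_2,g_3,g_4)$ defined by
\[
\phi(x,u) := (x, h(x)u),
\]
where $h : X \to R^\times$ is the map witnessing $(f_1,f_2,f_3,f_4) \sim_h (g_1,g_2,g_3,g_4)$. Since $h(x)$ is a unit in $R$ for every $x \in X$, the map $\phi$ is a bijection on each component $G_\lambda \times M$ with inverse $(x,u) \mapsto (x, h(x)^{-1}u)$, so it is bijective globally. The identity $\pr_X \circ \phi = \pr_X$ is immediate from the definition.

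Next I would verify that $\phi$ respects the quandle operation. Expanding $\phi((x,u) \tri (y,v))$ using the $\widetilde{X}(f_1,f_2,f_3,f_4)$-structure produces
\[
(x \tri y,\ h(x \tri y)f_1(x,y)u + h(x \tri y)f_2(x,y)v),
\]
while expanding $\phi(x,u) \tri \phi(y,v)$ in $\widetilde{X}(g_1,g_2,g_3,g_4)$ produces
\[
(x \tri y,\ g_1(x,y)h(x)u + g_2(x,y)h(y)v).
\]
The two equalities $h(x \tri y)f_1(x,y) = g_1(x,y)h(x)$ and $h(x \tri y)f_2(x,y) = g_2(x,y)h(y)$ in the definition of $\sim_h$ make these coincide.

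An analogous computation handles the group operation on each $G_\lambda \times M$: expanding $\phi((a,u)(b,v))$ gives
\[
(ab,\ h(ab)f_3(a,b)u + h(ab)f_4(a,b)v),
\]
and expanding $\phi(a,u)\phi(b,v)$ in $\widetilde{X}(g_1,g_2,g_3,g_4)$ gives
\[
(ab,\ g_3(a,b)h(a)u + g_4(a,b)h(b)v),
\]
and these agree by the two conditions $h(ab)f_3(a,b) = g_3(a,b)h(a)$ and $h(ab)f_4(a,b) = g_4(a,b)h(b)$.

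The argument is essentially a direct unpacking of the definitions; there is no real obstacle to overcome. The only thing to be careful about is invoking $h(x) \in R^\times$ (rather than just $R$) to guarantee bijectivity of $\phi$, which is why the equivalence relation was defined using $R^\times$-valued maps in the first place. Combining the four verifications above shows that $\phi$ is an MCQ homomorphism, hence an MCQ isomorphism, and it satisfies $\pr_X \circ \phi = \pr_X$ by construction.
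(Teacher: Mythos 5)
Your proposal is correct and follows exactly the paper's approach: the paper also defines $\phi(x,u):=(x,h(x)u)$ and notes (without writing out the computation) that the four conditions defining $\sim_h$ make $\phi$ an MCQ isomorphism with $\pr_X\circ\phi=\pr_X$. Your write-up simply supplies the routine verification that the paper leaves implicit.
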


\begin{proof}
Assume that $(f_1,f_2,f_3,f_4)\sim_h (g_1,g_2,g_3,g_4)$ for some map $h:X \to R^\times$.
Let $\phi$ be the map from $\widetilde{X}(f_1,f_2,f_3,f_4)$ to $\widetilde{X}(g_1,g_2,g_3,g_4)$ sending $(x,u)$ to $(x,h(x)u)$.
It is easy to see that 
$\phi$ is an MCQ isomorphism 
and that $\pr_X \circ \phi=\pr_X$.
\end{proof}


\begin{lemma}\label{simplification of a quadruple}
Let $X=\bigsqcup_{\lambda \in \Lambda}G_\lambda$ be an MCQ 
and $R$ a ring.
Let $f_1,f_2,f_3$ and $f_4$ be maps satisfying the conditions \eqref{eq:0-i}--\eqref{eq:4-iii}.
We define maps $g_1,g_2:X \times X \to R$ and $g_3,g_4:\bigsqcup_{\lambda \in \Lambda}(G_\lambda \times G_\lambda) \to R$ by 
\begin{align*}
&g_1(x,y):=f_1(e_x,y),\\
&g_2(x,y):=f_3(x \tri y, x^{-1}\tri y)f_2(x,y)f_3(e_y,y),\\
&g_3(a,b):=1,\\
&g_4(a,b):=f_1(e_a,a^{-1}).
\end{align*}
Then the following hold.
\begin{enumerate}
\item
The maps $g_1$, $g_2$, $g_3$ and $g_4$ satisfy the conditions \eqref{eq:0-i}--\eqref{eq:4-iii}.
\item
The pair $(g_1,g_2)$ is an MCQ Alexander pair.
\end{enumerate}
\end{lemma}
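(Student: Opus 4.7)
The plan is to avoid checking each of conditions \eqref{eq:0-i}--\eqref{eq:4-iii} for $(g_1,g_2,g_3,g_4)$ by hand, and instead produce an explicit set bijection from $\widetilde{X}(f_1,f_2,f_3,f_4)$ onto the same underlying set equipped with the tentative $g_i$-operations; this forces the latter structure to be an MCQ, whence Proposition~\ref{Alexander quadruple} immediately yields (1). Define $h:X \to R^\times$ by $h(x):=f_3(e_x,x)^{-1}$, which is invertible by~\eqref{eq:0-i} and satisfies $h(e_\lambda)=1$, and let
\[\phi:\widetilde{X}(f_1,f_2,f_3,f_4) \to \bigsqcup_{\lambda \in \Lambda}(G_\lambda \times M), \quad \phi(x,u):=(x,h(x)u),\]
where the codomain is tentatively given the operations defined by $g_1,g_2,g_3,g_4$. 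A direct expansion shows that $\phi$ respects both $\tri$ and the group multiplications exactly when the four ``cohomologous'' identities
\begin{align*}
h(x \tri y)\, f_1(x,y) &= g_1(x,y)\, h(x), & h(x \tri y)\, f_2(x,y) &= g_2(x,y)\, h(y),\\
h(ab)\, f_3(a,b) &= h(a), & h(ab)\, f_4(a,b) &= f_1(e_a,a^{-1})\, h(b)
\end{align*}
hold for all $x,y \in X$ and $a,b \in G_\lambda$.

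Verifying these four identities is the bulk of the work, and each is a short application of the quadruple axioms. The first follows from \eqref{eq:4-i} applied with $(a,b,x):=(e_x,x,y)$, using that $\cdot \tri y$ is a group homomorphism so that $e_x \tri y = e_{x \tri y}$. The third is \eqref{eq:0-ii} with $(a,b,c):=(e_\lambda,a,b)$. The second, after cancellation of the common factor $f_3(e_y,y)$, reduces to the single identity $f_3(x \tri y, x^{-1} \tri y)\, f_3(e_{x \tri y}, x \tri y)=1$, which is \eqref{eq:0-ii} with $(a,b,c):=(e_{x \tri y}, x \tri y, x^{-1} \tri y)$ together with the MCQ identity $(x \tri y)(x^{-1} \tri y)=e_{x \tri y}$. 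The fourth is the most delicate and is the main obstacle: the case $b=e_\lambda$ of \eqref{eq:0-iii} gives $f_4(a,b)\, f_3(e_\lambda,b) = f_3(a,b)\, f_4(a,e_\lambda)$, and the relation $f_3(e_\lambda,ab)=f_3(a,b)\,f_3(e_\lambda,a)$ from \eqref{eq:0-ii} reduces the identity to its $b=e_\lambda$ case $f_4(a,e_\lambda)=f_3(e_\lambda,a)\, f_1(e_\lambda,a^{-1})$; this in turn is produced by combining \eqref{eq:1-i} at $(e_\lambda,a^{-1})$ (which gives $f_4(a,a^{-1})=f_1(e_\lambda,a^{-1})\, f_3(e_\lambda,a^{-1})^{-1}$), the previous reduction specialised to $b=a^{-1}$, and the consequence $f_3(a,a^{-1})\, f_3(e_\lambda,a)=1$ of \eqref{eq:0-ii}.

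With the four identities established, $\phi$ is a bijection preserving every operation, so the image $\widetilde{X}(g_1,g_2,g_3,g_4)$ is an MCQ, and the converse direction of Proposition~\ref{Alexander quadruple} proves (1). For (2), note that by construction $g_3(a,b)=1$ and $g_4(a,b)=f_1(e_a,a^{-1})=g_1(a,a^{-1})$, so the group law of $\widetilde{X}(g_1,g_2,g_3,g_4)$ is precisely $(a,u)(b,v)=(ab,\,u+g_1(a,a^{-1})v)$, while the $\tri$-operation is determined by $g_1,g_2$; this is exactly the MCQ $\widetilde{X}(g_1,g_2)$ of Proposition~\ref{prop:MCQ Alexander pair}. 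Applying the converse direction of that proposition then yields that $(g_1,g_2)$ is an MCQ Alexander pair.
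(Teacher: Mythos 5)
Your proof is correct, but it takes a genuinely different route from the paper's. The paper proves part (1) by a five-step direct verification of each of the conditions \eqref{eq:0-i}--\eqref{eq:4-iii} for $(g_1,g_2,g_3,g_4)$, and part (2) by a further direct check of the MCQ Alexander pair axioms; you instead transport the MCQ structure of $\widetilde{X}(f_1,f_2,f_3,f_4)$ through the explicit bijection $\phi(x,u)=(x,h(x)u)$ and then invoke the converse halves of Proposition~\ref{Alexander quadruple} and Proposition~\ref{prop:MCQ Alexander pair}. Your rescaling map $h(x)=f_3(e_x,x)^{-1}$ coincides with the paper's choice $h(x)=f_3(x,x^{-1})$ in the proof of the main theorem (they agree by \eqref{eq:0-ii} applied to $(e_\lambda,x,x^{-1})$ together with $f_3(x,e_\lambda)=1$), so in effect you have front-loaded the cohomologous computation that the paper defers to the theorem, and the four intertwining identities you verify are exactly the ones the theorem's proof also needs; I checked each of your reductions (including the delicate fourth one, where the final step $f_3(e_\lambda,a)f_3(a,a^{-1})=1$ does require the two-sided invertibility guaranteed by \eqref{eq:0-i}) and they are sound. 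What your approach buys is brevity and a conceptual explanation of \emph{why} the $g_i$ satisfy the axioms --- they define a structure isomorphic to a known MCQ --- at the price of leaning on the ``only if'' directions of the two propositions (both already proved, so there is no circularity) and of not producing the explicit condition-by-condition identities that the paper's computation records along the way.
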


\begin{proof}
\begin{enumerate}
\item
We prove that $g_1$, $g_2$, $g_3$ and $g_4$ satisfy the conditions \eqref{eq:0-i}--\eqref{eq:4-iii} in five steps.

\begin{enumerate}[Step 1:]
\setcounter{enumii}{-1}
\setlength{\leftskip}{-30pt}
\item
We can easily check that $g_1$, $g_2$, $g_3$ and $g_4$ satisfy the conditions \eqref{eq:0-i}, \eqref{eq:0-ii} and \eqref{eq:0-iii}.
For any $a,b,c \in G_\lambda$, it follows 
\begin{align*}
g_4(ab,c)
=f_1(e_{\lambda},b^{-1}a^{-1})
=f_1(e_{\lambda},a^{-1})f_1(e_{\lambda},b^{-1})
=g_4(a,bc)g_4(b,c),
\end{align*}
where the second equality comes from \eqref{eq:2-ii}.
Hence the maps $g_1$, $g_2$, $g_3$ and $g_4$ satisfy  the condition \eqref{eq:0-iv}.

\item
We can easily check that $g_1$, $g_2$, $g_3$ and $g_4$ satisfy the condition \eqref{eq:1-i}.
For any $a,b \in G_\lambda$, it follows
\begin{align*}
&g_2(a,b)\\
&=f_3(a \tri b,a^{-1} \tri b)f_2(a,b)f_3(e_\lambda,b)\\
&=-f_3(b^{-1}ab,b^{-1}a^{-1}b)f_3(b^{-1},ab)f_4(b^{-1},e_\lambda)f_3(b,b^{-1})f_3(e_\lambda,b)\\
   &\quad +f_3(b^{-1}ab,b^{-1}a^{-1}b)f_4(b^{-1},ab)f_4(a,b)f_3(e_\lambda,b)\\
&=-f_3(b^{-1},b)f_4(b^{-1},e_\lambda)+f_4(b^{-1},b)f_3(ab,b^{-1}a^{-1}b)f_4(a,b)f_3(e_\lambda,b) \\
&=-f_4(b^{-1},b)f_3(e_\lambda,b)+f_4(b^{-1},b)f_3(ab,b^{-1}a^{-1}b)f_4(ab,e_\lambda)f_4(b^{-1},b)f_3(e_\lambda,b) \\
&=-f_1(e_\lambda,b)+f_4(b^{-1},b)f_4(ab,b^{-1}a^{-1}b)f_3(e_\lambda,b^{-1}a^{-1}b)f_4(b^{-1},b)f_3(e_\lambda,b), 
\end{align*}
where the second (resp. third) equality comes from \eqref{eq:1-ii} (resp. \eqref{eq:0-ii} and \eqref{eq:0-iii}), 
and where the fourth (resp. fifth) equality comes from \eqref{eq:0-iii} and \eqref{eq:0-iv} (resp. \eqref{eq:0-iii} and \eqref{eq:1-i}).
On the other hand, 
it follows 
\begin{align*}
&-g_3(b^{-1},ab)g_4(b^{-1},e_\lambda)g_3(b,b^{-1})+g_4(b^{-1},ab)g_4(a,b)\\
&=-f_1(e_\lambda,b)+f_1(e_\lambda,b)f_1(e_\lambda,a^{-1})\\
&=-f_1(e_\lambda,b)+f_1(e_\lambda,b^{-1}a^{-1}b)f_1(e_\lambda,b)\\
&=-f_1(e_\lambda,b)+f_4(b^{-1}ab,b^{-1}a^{-1}b)f_3(e_\lambda,b^{-1}a^{-1}b)f_4(b^{-1},b)f_3(e_\lambda,b)\\
&=-f_1(e_\lambda,b)+f_4(b^{-1},b)f_4(ab,b^{-1}a^{-1}b)f_3(e_\lambda,b^{-1}a^{-1}b)f_4(b^{-1},b)f_3(e_\lambda,b),
\end{align*}
where the second (resp. third) equality comes from \eqref{eq:3-i} (resp. \eqref{eq:1-i}), 
and where the fourth equality comes from \eqref{eq:0-iv}.
Hence we have 
\[g_2(a,b)=-g_3(b^{-1},ab)g_4(b^{-1},e_\lambda)g_3(b,b^{-1})+g_4(b^{-1},ab)g_4(a,b),\] 
which implies that 
the maps $g_1$, $g_2$, $g_3$ and $g_4$ satisfy the condition \eqref{eq:1-ii}.

\item
We can easily check that $g_1$, $g_2$, $g_3$ and $g_4$ satisfy the condition \eqref{eq:2-i}.
For any $x \in X$ and $a,b \in G_{\lambda}$, it follows 
\begin{align*}
g_1(x,ab)
=f_1(e_x,ab)
=f_1(e_x \tri a,b)f_1(e_x,a)
=g_1(x \tri a,b)g_1(x,a),
\end{align*}
where the second equality comes from \eqref{eq:2-ii}.
It follows 
\begin{align*}
g_2(x,ab)g_3(a,b)
&=f_3(x \tri ab,x^{-1} \tri ab)f_2(x,ab)f_3(e_{\lambda},ab) \\
&=f_3(x \tri ab,x^{-1} \tri ab)f_2(x,ab)f_3(a,b)f_3(e_{\lambda},a)\\
&=f_3(x \tri ab,x^{-1} \tri ab)f_1(x \tri a,b)f_2(x,a)f_3(e_{\lambda},a)\\
&=f_1(e_x \tri a,b)f_3(x \tri a, x^{-1} \tri a)f_2(x,a)f_3(e_{\lambda},a)\\
&=g_1(x \tri a,b)g_2(x,a),
\end{align*}
where the second (resp. third) equality comes from \eqref{eq:0-ii} (resp. \eqref{eq:2-iii}), 
and where the fourth equality comes from \eqref{eq:4-i}.
It follows 
\begin{align*}
g_2(x,ab)g_4(a,b)
&=f_3(x\tri ab,x^{-1} \tri ab)f_2(x,ab)f_3(e_\lambda,ab)f_1(e_\lambda,a^{-1})\\
&=f_3(x\tri ab,x^{-1} \tri ab)f_2(x,ab)f_1(ba,a^{-1})f_3(e_\lambda,ba) \\
&=f_3(x\tri ab,x^{-1} \tri ab)f_2(x,ab)f_1(ba,a^{-1})f_3(b,a)f_3(e_\lambda,b)\\
&=f_3(x \tri ab,x^{-1} \tri ab)f_2(x,ab)f_4(a,b)f_3(e_\lambda,b)\\
&=f_3(x \tri ab,x^{-1} \tri ab)f_2(x \tri a,b)f_3(e_\lambda,b)\\
&=g_2(x \tri a,b),
\end{align*}
where the second (resp. third) equality comes from \eqref{eq:4-i} (resp. \eqref{eq:0-ii}), 
and where the fourth (resp. fifth) equality comes from \eqref{eq:1-i} (resp. \eqref{eq:2-iv}).
Hence the maps $g_1$, $g_2$, $g_3$ and $g_4$ satisfy the conditions \eqref{eq:2-ii}, \eqref{eq:2-iii} and \eqref{eq:2-iv}.

\item
For any $x,y,z \in X$, it follows 
\begin{align*}
g_1(x \tri y,z)g_1(x,y)
&=f_1(e_x \tri y,z)f_1(e_x,y)\\
&=f_1(e_x \tri z,y \tri z)f_1(e_x,z)\\
&=g_1(x \tri z,y \tri z)g_1(x,z),
\end{align*}
where the second equality comes from \eqref{eq:3-i}.
It follows 
\begin{align*}
&g_1(x \tri y,z)g_2(x,y)\\
&=f_1(e_x \tri y,z)f_3(x \tri y,x^{-1} \tri y)f_2(x,y)f_3(e_{y},y)\\
&= f_3((x \tri y) \tri z, (x^{-1} \tri y) \tri z)f_1(x \tri y,z)f_2(x,y)f_3(e_{y},y)\\
&= f_3((x \tri y) \tri z, (x^{-1} \tri y) \tri z)f_2(x \tri z,y \tri z)f_1(y,z)f_3(e_{y},y)\\
&= f_3((x \tri y) \tri z, (x^{-1} \tri y) \tri z)f_2(x \tri z,y \tri z) f_3(e_{y}\tri z,y \tri z)f_1(e_{y},z)\\
&=f_3((x \tri z) \tri (y \tri z),(x^{-1} \tri z) \tri (y \tri z))f_2(x \tri z,y \tri z)f_3(e_{y} \tri z,y \tri z)f_1(e_{y},z)\\
&=g_2(x \tri z,y \tri z)g_1(y,z),
\end{align*}
where the second (resp. third) equality comes from \eqref{eq:4-i} (resp. \eqref{eq:3-ii}), 
and where the fourth equality comes from \eqref{eq:4-i}.
It follows 
\begin{align*}
&g_2(x \tri y,z)\\
&=f_3((x \tri y) \tri z, (x^{-1} \tri y) \tri z)f_2(x \tri y,z)f_3(e_{z},z)\\
&=f_3((x \tri y) \tri z, (x^{-1} \tri y) \tri z)f_1(x \tri z,y \tri z)f_2(x,z)f_3(e_{z},z)\\
      &\quad +f_3((x \tri y) \tri z, (x^{-1} \tri y) \tri z)f_2(x \tri z,y \tri z)f_2(y,z)f_3(e_{z},z)\\
&=f_1(e_x \tri z,y \tri z)f_3(x \tri z, x^{-1} \tri z)f_2(x,z)f_3(e_{z},z)\\
     &\quad +f_3((x \tri z) \tri (y \tri z), (x^{-1} \tri z) \tri (y \tri z))f_2(x \tri z,y \tri z)f_2(y,z)f_3(e_{z},z)\\
&=f_1(e_x \tri z,y \tri z)f_3(x \tri z, x^{-1} \tri z)f_2(x,z)f_3(e_{z},z)\\
     &\quad +f_3((x \tri z) \tri (y \tri z), (x^{-1} \tri z) \tri (y \tri z))f_2(x \tri z,y \tri z)f_3(e_{y} \tri z,y \tri z)\\
     &\quad \qquad f_3(y \tri z, y^{-1} \tri z)f_2(y,z)f_3(e_{z},z)\\
&=g_1(x \tri z,y \tri z)g_2(x,z)+g_2(x \tri z,y \tri z)g_2(y,z),
\end{align*}
where the second (resp. third) equality comes from \eqref{eq:3-iii} (resp. \eqref{eq:4-i}).
Hence the maps $g_1$, $g_2$, $g_3$ and $g_4$ satisfy the conditions \eqref{eq:3-i}, \eqref{eq:3-ii} and \eqref{eq:3-iii}.

\item
We can easily check that $g_1$, $g_2$, $g_3$ and $g_4$ satisfy the condition \eqref{eq:4-i}.
For any $a,b \in G_\lambda$ and $x \in X$, it follows 
\begin{align*}
g_1(ab,x)g_4(a,b)
&=f_1(e_\lambda,x)f_1(e_\lambda,a^{-1})\\
&=f_1(e_\lambda \tri x,a^{-1} \tri x)f_1(e_\lambda,x)\\
&=g_4(a \tri x, b \tri x)g_1(b,x),
\end{align*}
where the second equality comes from \eqref{eq:3-i}.
It follows 
\begin{align*}
&g_2(ab,x)\\
&=f_3(ab \tri x, b^{-1}a^{-1} \tri x)f_2(ab,x)f_3(e_{x},x)\\
&=f_3(ab \tri x, b^{-1}a^{-1} \tri x)f_3(a \tri x,b \tri x)f_2(a,x)f_3(e_{x},x)\\
     & \quad +f_3(ab \tri x, b^{-1}a^{-1} \tri x)f_4(a \tri x,b \tri x)f_2(b,x)f_3(e_{x},x) \\
&=f_3(a \tri x,a^{-1} \tri x)f_2(a,x)f_3(e_{x},x)\\
     & \quad +f_4(a \tri x,a^{-1} \tri x)f_3(b \tri x,b^{-1}a^{-1} \tri x)f_2(b,x)f_3(e_{x},x)\\
&=f_3(a \tri x,a^{-1} \tri x)f_2(a,x)f_3(e_{x},x)\\
     & \quad +f_4(a \tri x,a^{-1} \tri x)f_3(e_\lambda \tri x,a^{-1} \tri x)f_3(b \tri x,b^{-1} \tri x) f_2(b,x)f_3(e_{x},x)\\
&=f_3(a \tri x,a^{-1} \tri x)f_2(a,x)f_3(e_{x},x)\\
     & \quad +f_1(e_\lambda \tri x,a^{-1} \tri x)f_3(b \tri x,b^{-1} \tri x)f_2(b,x)f_3(e_{x},x)\\ 
&=g_3(a \tri x,b \tri x)g_2(a,x)+g_4(a \tri x,b \tri x)g_2(b,x),
\end{align*}
where the second (resp. third) equality comes from \eqref{eq:4-iii} (resp. \eqref{eq:0-ii} and \eqref{eq:0-iii}), 
and where the fourth (resp. fifth) equality comes from \eqref{eq:0-ii} (resp. \eqref{eq:1-i}).
Hence the maps $g_1$, $g_2$, $g_3$ and $g_4$ satisfy the conditions \eqref{eq:4-ii} and \eqref{eq:4-iii}.
\end{enumerate}

This completes the proof.

\item

We show that 
the pair $(g_1,g_2)$ is an MCQ Alexander pair.
Here, we remark that 
the maps $g_1$, $g_2$, $g_3$ and $g_4$ satisfy the conditions \eqref{eq:0-i}--\eqref{eq:4-iii}, 
and that $g_4(a,b)=g_1(a,a^{-1})$ for any $a,b \in G_\lambda$.
We also remark that 
$g_1(a,x)=g_1(b,x)$ for any $a,b \in G_\lambda$ and $x \in X$.

For any $a,b \in G_\lambda$,
it follows
\begin{align*}
g_2(a,b)
&=-g_1(b^{-1},b)+ g_1(b^{-1},b)g_1(a, a^{-1})\\
&=-g_1(a,b)+ g_1(a,b)g_1(a, a^{-1})\\
&=-g_1(a,b)+ g_1(a,a^{-1}b), 
\end{align*}
where the first (resp. third) equality comes from \eqref{eq:1-ii} (resp. \eqref{eq:2-ii}).
Hence we have 
$g_1(a,b)+g_2(a,b)=g_1(a,a^{-1}b)$.

For any $a,b \in G_\lambda$ and $x \in X$, 
we can easily check that $g_1(a,x)=g_1(b,x)$, 
and it follows 
\begin{align*}
g_2(ab,x)
&=g_2(a,x)+g_1(a \tri x,a^{-1} \tri x)g_2(b,x)\\
&=g_2(a,x)+g_1(b \tri x,a^{-1} \tri x)g_2(b,x),
\end{align*}
where the first equality comes from \eqref{eq:4-iii}.

For any $x \in X$ and $a,b \in G_{\lambda}$,
we can easily check that 
\begin{align*}
&g_1(x,e_{\lambda})=1,\\
&g_1(x,ab)=g_1(x \tri a,b)g_1(x,a),\\
&g_2(x,ab)=g_1(x \tri a,b)g_2(x,a)
\end{align*}
by \eqref{eq:2-i}, \eqref{eq:2-ii} and \eqref{eq:2-iii}. 

For any $x,y,z \in X$, 
we can easily check that 
\begin{align*}
&g_1(x \tri y,z)g_1(x,y)=g_1(x \tri z,y \tri z)g_1(x,z),\\
&g_1(x \tri y,z)g_2(x,y)=g_2(x \tri z,y \tri z)g_1(y,z),\\
&g_2(x \tri y,z)=g_1(x \tri z,y \tri z)g_2(x,z)+g_2(x \tri z,y \tri z)g_2(y,z)
\end{align*}
by \eqref{eq:3-i}, \eqref{eq:3-ii} and \eqref{eq:3-iii}. 

Therefore the pair $(g_1,g_2)$ is an MCQ Alexander pair.
\end{enumerate}
\end{proof}

\begin{theorem}
Let $X=\bigsqcup_{\lambda \in \Lambda}G_\lambda$ be an MCQ, 
$R$ a ring and $M$ a left $R$-module.
For any quadruple $(f_1,f_2,f_3,f_4)$ of maps satisfying the conditions \eqref{eq:0-i}--\eqref{eq:4-iii}, 
there exists an MCQ Alexander pair $(g_1,g_2)$ 
such that $\widetilde{X}(f_1,f_2,f_3,f_4) \cong \widetilde{X}(g_1,g_2)$.
\end{theorem}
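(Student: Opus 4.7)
The plan is to combine Lemma~\ref{simplification of a quadruple} with Proposition~\ref{prop:cohomologous}. Given a quadruple $(f_1,f_2,f_3,f_4)$ satisfying \eqref{eq:0-i}--\eqref{eq:4-iii}, let $(g_1,g_2,g_3,g_4)$ be the reduced quadruple supplied by Lemma~\ref{simplification of a quadruple}. Part~(2) of that lemma already tells us that $(g_1,g_2)$ is an MCQ Alexander pair. Since $g_3\equiv 1$ and $g_4(a,b)=f_1(e_a,a^{-1})=g_1(a,a^{-1})$, the quandle and group operations on $\widetilde{X}(g_1,g_2,g_3,g_4)$ from Proposition~\ref{Alexander quadruple} are identical to those on $\widetilde{X}(g_1,g_2)$ from Proposition~\ref{prop:MCQ Alexander pair}; the two MCQs coincide. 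Thus the theorem is reduced to producing an MCQ isomorphism $\widetilde{X}(f_1,f_2,f_3,f_4)\cong\widetilde{X}(g_1,g_2,g_3,g_4)$.

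By Proposition~\ref{prop:cohomologous} it suffices to exhibit a map $h:X\to R^\times$ with $(f_1,f_2,f_3,f_4)\sim_h(g_1,g_2,g_3,g_4)$. Guided by the required identity $h(ab)f_3(a,b)=h(a)$ and $f_3(e_\lambda,e_\lambda)=1$, the natural candidate is
\[
h(x):=f_3(e_x,x)^{-1},
\]
which lies in $R^\times$ by \eqref{eq:0-i}. Three of the four equivalence identities should fall out quickly: the $f_3$-identity is \eqref{eq:0-ii} at $(e_\lambda,a,b)$; the $f_1$-identity $h(x\tri y)f_1(x,y)=g_1(x,y)h(x)$ is \eqref{eq:4-i} at $(a,b,x)=(e_\lambda,x,y)$ after noting $e_\lambda\tri y=e_{x\tri y}$; and the $f_2$-identity telescopes using \eqref{eq:0-ii} together with the MCQ computation $(x\tri y)(x^{-1}\tri y)=e_x\tri y=e_{x\tri y}$, which forces $f_3(e_{x\tri y},x\tri y)\,f_3(x\tri y,x^{-1}\tri y)=1$.

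The main obstacle is the remaining $f_4$-identity, which unwinds to
\[
f_4(a,b)\,f_3(e_\lambda,b)=f_1(e_\lambda,a^{-1})\,f_3(e_\lambda,ab).
\]
For this step I would apply \eqref{eq:0-iii} with $c=(ab)^{-1}$ together with $f_3(ab,b^{-1}a^{-1})^{-1}=f_3(e_\lambda,ab)$ (a consequence of the remark preceding Proposition~\ref{Alexander quadruple}) to obtain
\[
f_4(a,b)=f_3(e_\lambda,ab)\,f_4(a,a^{-1})\,f_3(b,b^{-1}a^{-1}).
\]
One then absorbs $f_3(b,b^{-1}a^{-1})\,f_3(e_\lambda,b)=f_3(e_\lambda,a^{-1})$ via \eqref{eq:0-ii}, reducing the required identity to $f_4(a,a^{-1})\,f_3(e_\lambda,a^{-1})=f_1(e_\lambda,a^{-1})$, which is exactly \eqref{eq:1-i} at $(a,b)=(e_\lambda,a^{-1})$. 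With all four identities verified, Proposition~\ref{prop:cohomologous} delivers $\widetilde{X}(f_1,f_2,f_3,f_4)\cong\widetilde{X}(g_1,g_2,g_3,g_4)=\widetilde{X}(g_1,g_2)$, completing the proof.
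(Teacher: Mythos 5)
Your proposal is correct and follows essentially the same route as the paper: the same reduced quadruple from Lemma~\ref{simplification of a quadruple}, the same appeal to Proposition~\ref{prop:cohomologous}, and in fact the very same rescaling map, since $f_3(e_x,x)^{-1}=f_3(x,x^{-1})$ by the remark preceding Proposition~\ref{Alexander quadruple}. One cosmetic point: since $R$ need not be commutative, the unwound $f_4$-identity should read $f_4(a,b)\,f_3(e_\lambda,b)=f_3(e_\lambda,ab)\,f_1(e_\lambda,a^{-1})$ (right-hand factors in the other order than you wrote), which is exactly what your subsequent chain of equalities establishes.
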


\begin{proof}
Let $(f_1,f_2,f_3,f_4)$ be a quadruple of maps satisfying the conditions \eqref{eq:0-i}--\eqref{eq:4-iii}, 
and let $g_1,g_2:X \times X \to R$ and $g_3,g_4:\bigsqcup_{\lambda \in \Lambda}(G_\lambda \times G_\lambda) \to R$ be maps 
defined by 
\begin{align*}
&g_1(x,y):=f_1(e_x,y),\\
&g_2(x,y):=f_3(x \tri y, x^{-1}\tri y)f_2(x,y)f_3(e_y,y),\\
&g_3(a,b):=1,\\
&g_4(a,b):=f_1(e_a,a^{-1}).
\end{align*}
By Lemma~\ref{simplification of a quadruple} (1), 
the maps $g_1,g_2,g_3$ and $g_4$ satisfy the conditions \eqref{eq:0-i}--\eqref{eq:4-iii}.
We define the map $h:X \to R^{\times}$ by $h(x):=f_3(x,x^{-1})$.
Then for any $x,y \in X$, it follows 
\begin{align*}
h(x \tri y)f_1(x,y)
&=f_3(x \tri y, x^{-1} \tri y)f_1(x,y)\\
&=f_1(e_x,y)f_3(x,x^{-1})\\
&=g_1(x,y)h(x),
\end{align*}
where the second equality comes from \eqref{eq:4-i}, 
and
\begin{align*}
h(x \tri y)f_2(x,y)
=f_3(x \tri y, x^{-1} \tri y)f_2(x,y)f_3(e_y,y)f_3(y,y^{-1})
=g_2(x,y)h(y).
\end{align*}
For any $a,b \in G_\lambda$, it follows 
\begin{align*}
h(ab)f_3(a,b)
=f_3(ab, b^{-1}a^{-1})f_3(a,b)
=f_3(a,a^{-1})
=g_3(a,b)h(a),
\end{align*}
where the second equality comes from \eqref{eq:0-ii}, 
and
\begin{align*}
h(ab)f_4(a,b)
&=f_3(ab,b^{-1}a^{-1})f_4(a,b)f_3(b,b^{-1})^{-1}f_3(b,b^{-1})\\
&=f_4(a,a^{-1})f_3(b,b^{-1}a^{-1})f_3(e_\lambda,b)f_3(b,b^{-1})\\
&=f_4(a,a^{-1})f_3(e_\lambda,a^{-1})f_3(b,b^{-1})\\
&=f_1(e_\lambda,a^{-1})f_3(b,b^{-1}) \\
&=g_4(a,b)h(b),
\end{align*}
where the second (resp. third) equality comes from \eqref{eq:0-iii} (resp. \eqref{eq:0-ii}), 
and where the fourth equality comes from \eqref{eq:1-i}.
Hence we have $(f_1,f_2,f_3,f_4)\sim_h (g_1,g_2,g_3,g_4)$, 
which implies $\widetilde{X}(f_1,f_2,f_3,f_4) \cong \widetilde{X}(g_1,g_2,g_3,g_4)$ 
by Proposition~\ref{prop:cohomologous}.
By Lemma~\ref{simplification of a quadruple} (2), 
$(g_1,g_2)$ is an MCQ Alexander pair.
Therefore we obtain that 
$\widetilde{X}(g_1,g_2,g_3,g_4) = \widetilde{X}(g_1,g_2)$ by the definitions.
This completes the proof.
\end{proof}

\section*{Acknowledgment}
The author would like to thank 
Atsushi Ishii and Shosaku Matsuzaki
for valuable discussions 
and making suggestions 
for improvement.
The author was supported by JSPS KAKENHI Grant Number 18J10105.


\end{document}